\newcommand{\R}{\mathbb{R}}
\newcommand{\C}{\mathbb{C}}
\newcommand{\N}{\mathbb{N}}
\newcommand{\Z}{\mathbb{Z}}
\newcommand{\calI}{\mathcal{I}}
\newcommand{\bla}{\big \langle}
\newcommand{\bra}{\big \rangle}
\numberwithin{equation}{section}
\newcommand{\ud}[0]{\,\mathrm{d}}
\newcommand{\esssup}[0]{\operatornamewithlimits{ess\,sup}}
\newcommand{\BMO}[0]{\operatorname{BMO}}
\newcommand{\bmo}[0]{\operatorname{bmo}}
\newcommand{\calD}[0]{\mathcal{D}}
\theoremstyle{plain}
\newtheorem{thm}[equation]{Theorem}
\newtheorem{lem}[equation]{Lemma}
\newtheorem{prop}[equation]{Proposition}
\newtheorem{cor}[equation]{Corollary}
\theoremstyle{definition}
\theoremstyle{remark}
\newtheorem{rem}[equation]{Remark}
\title[Some new weighted estimates on product spaces]{Some new weighted estimates on product spaces}
\author[E.\, Airta]{Emil Airta}
\address[E.A.]{Department of Mathematics and Statistics, University of Helsinki, P.O.B. 68, FI-00014 University of Helsinki, Finland}
\email{emil.airta@helsinki.fi}
\author[K.\ Li]{Kangwei Li}
\address[K.L.]{Center for Applied Mathematics, Tianjin University, Weijin Road 92, 300072 Tianjin, China}
\email{kli@tju.edu.cn}
\author[H.\ Martikainen]{Henri Martikainen}
\address[H.M.]{Department of Mathematics and Statistics, University of Helsinki, P.O.B. 68, FI-00014 University of Helsinki, Finland}
\email{henri.martikainen@helsinki.fi}
\author[E.\ Vuorinen]{Emil Vuorinen}
\address[E.V.]{Department of Mathematics and Statistics, University of Helsinki, P.O.B. 68, FI-00014 University of Helsinki, Finland}
\email{j.e.vuorin@gmail.com}
\subjclass[2010]{42B20}
\keywords{bilinear analysis, bi-parameter analysis, model operators, weighted estimates}
\thanks
{H.M. was supported by the Academy of Finland through the grants 294840 and 327271, and by the three-year research grant 75160010 of the University of Helsinki. E.A. was supported by the Academy of Finland through the grant 327271.
They are both members of the Finnish Centre of Excellence in Analysis and Dynamics Research supported by the Academy of Finland (project No. 307333).
}
\begin{document}

\begin{abstract}
We complete our theory of weighted $L^p(w_1) \times L^q(w_2) \to L^r(w_1^{r/p} w_2^{r/q})$ estimates
for bilinear bi-parameter Calder\'on--Zygmund operators under the assumption that $w_1 \in A_p$ and $w_2 \in A_q$
are bi-parameter weights. This is done by lifting a previous restriction on the class of singular integrals
by extending a classical result of Muckenhoupt and Wheeden regarding weighted $\BMO$ spaces to the product BMO setting. We use this extension of the Muckenhoupt-Wheeden result also to generalise some two-weight commutator estimates from bi-parameter
to multi-parameter. This gives a fully satisfactory Bloom type upper estimate for $[T_1, [T_2, \ldots [b, T_k]]]$, where
each $T_i$ can be a completely general multi-parameter Calder\'on--Zygmund operator.
\end{abstract}

\maketitle

\section{Introduction}
Singular integral operators (SIOs) are operators of the form
\begin{equation*}\label{eq:SIO}
  Tf(x)=\int_{\R^d}K(x,y)f(y)\ud y.
\end{equation*}
They include many important linear transformations that arise in the analysis connecting
geometric measure theory, partial differential equations, harmonic analysis and functional analysis. Classical one-parameter kernels are singular when $x=y$. Product
space theory (multi-parameter theory), on the other hand, is concerned with kernels whose singularities are more spread out. To get an idea, for $x,y \in \C = \R \times \R$, compare the one-parameter Beurling kernel $1/(x-y)^2$ with the bi-parameter kernel $1/[(x_1-y_1)(x_2-y_2)]$ -- the product of Hilbert kernels in both coordinate directions.

Multi-parameter SIOs arise naturally in applications involving a product type estimate. A simple example is given by the multiplier operators. A multiplier $m \colon \R \times \R \to \C$ satisfying $|\partial^{\alpha} m(\xi)| \lesssim |\xi|^{-|\alpha|}$ for all multi-indices $\alpha = (\alpha_1, \alpha_2)$ and $\xi = (\xi_1, \xi_2) \in \R^2 \setminus \{0\}$
gives rise to a convolution form one-parameter SIO
$T_m$ with $\widehat{T_m f}(\xi) = m(\xi) \widehat f(\xi)$. However, if $m$ only satisfies \emph{the less demanding} estimate $|\partial_{\xi_1}^{\alpha_1} \partial_{\xi_2}^{\alpha_2} m(\xi)| \lesssim |\xi_1|^{-|\alpha_1|} |\xi_2|^{-|\alpha|_2}$, we get a bi-parameter SIO $T_m$. For the classical linear multi-parameter theory and some of its original applications
see e.g. Fefferman--Stein \cite{FS} and Journ\'e \cite{Jo}.

On the other hand, a heuristic model of an $n$-linear SIO $T$ in $\R^d$ is obtained by setting
$$
T(f_1,\ldots, f_n)(x) = U(f_1 \otimes \cdots \otimes f_n)(x,\ldots,x), \qquad x \in \R^d,\, f_i \colon \R^d \to \C,
$$
where $U$ is a linear singular integral operator in $\R^{nd}$. See e.g. Grafakos--Torres \cite{GT} for the basic theory.
Multilinear SIOs arise naturally from applications to partial differential equations, complex function theory and ergodic theory, among others.
For instance, $L^p$ estimates for the homogeneous fractional derivative $D^{\alpha} f=\mathcal F^{-1}(|\xi|^{\alpha} \widehat f(\xi))$ of a product of two or more functions, often referred to as \emph{fractional Leibniz rules}, are widely employed in the study of dispersive equations. This started from the work of Kato and Ponce \cite{KP}, and such estimates descend from the multilinear H\"ormander-Mihlin multiplier theorem of Coifman-Meyer \cite{CM}. A variety of formulations may be found e.g. in Grafakos--Oh \cite{GO}.

Finally, multilinear multi-parameter estimates arise naturally in applications whenever
a multilinear phenomena, like the fractional Leibniz rules, are combined with product type estimates, such as those that arise when we want to take different \emph{partial} fractional derivatives. We refer to our recent work \cite{LMV} for a thorough background on the subject, its significance and for new developments.
In the product setting this multilinear theory of SIOs is historically significantly more limited than in the one-parameter setting. For example, in the one-parameter -- linear or multilinear -- setting, the following Calder\'on--Zygmund type principle is standard: if an SIO is bounded with some exponents, it is bounded with all eligible exponents. In the linear
bi-parameter setting such principles follow from \cite{Jo} or \cite{Ma1}, but they are already more involved.
In \cite{LMV} we finally were able to develop such general principles in the bilinear bi-parameter setting: 
simpler estimates in the Banach range ($r > 1$), imply boundedness in the full bilinear range $L^p \times L^q \to L^r$, $1/p + 1/q = 1/r$, $1 < p,q \le \infty$, $1/2 < r < \infty$, weighted estimates, mixed-norm estimates, and so on.

However, our weighted
estimates in \cite{LMV} (see Section \ref{sec:def}
for the definition of $A_p$ weights) still had the restriction that we
needed the cancellation $T(1,1) = 0$, and the same for the adjoints and partial adjoints.
It is easy to come up with singular integrals, where this cancellation does not hold: already a tensor product
of two bilinear one-parameter SIOs does not in general satisfy it. In this paper we remove this final restriction, which leads
to a complete and satisfactory theory for general bilinear bi-parameter SIOs.

The assumptions on objects like $T(1,1)$ have to do with $T1$ type arguments.
Already in the linear one-parameter setting the question of the boundedness of $T$ with a generic kernel is often best answered by so-called $T1$ theorems, where the action of the operator $T$ on the constant function $1$ plays a critical role, and it is only the
convolution kernels $K(x,y)=k(x-y)$, which are conveniently studied via formulae like
$
\widehat{Tf}(\xi) =\widehat{k}(\xi)\widehat{f}(\xi).
$
While the assumption $T1 = 0$ is more general than $T$ being of convolution form, it is morally related.

The reason why we care about weighted estimates is that, beyond their significant intrinsic interest,
they are of fundamental use in proving other (unweighted) estimates, like obtaining the full bilinear range of exponents
from a single tuple $(p_0, q_0, r_0)$
and proving vector-valued and mixed-norm estimates. This is due to the very powerful bilinear extrapolation results -- see e.g.
\cite{DU, GM, LMO, LMMOV, Nieraeth}. In the product setting this viewpoint is particularly useful as many of the classical one-parameter
tools are crudely missing. However, extrapolation is already very convenient in the one-parameter multilinear theory due to, for example,
the complicated nature of multilinear interpolation.

For example, we obtained a range of mixed-norm $L^{p_1}L^{p_2}$ estimates in \cite{LMV} via weighted estimates, and, thus, they required
the same restriction on the class of SIOs. See e.g. Di Plinio--Ou \cite{DO} for some previous mixed-norm estimates.
 With the understanding that a Calder\'on--Zygmund operator (CZO) is an SIO
satisfying natural $T1$ type assumptions, our improvement of \cite{LMV} now reads:
\begin{thm}\label{thm:main2}
Let $T$ be a bilinear bi-parameter CZO as defined in \cite{LMV}. Then we have the weighted estimate
\[
\|T(f,g)\|_{L^r(w)}\le C([w_1]_{A_p}, [w_2]_{A_q}) \|f\|_{L^p(w_1)}\|g\|_{L^q(w_2)}
\]
for all $1<p,q<\infty$ and $1/2<r<\infty$ with $1/r=1/p+1/q$, and for all bi-parameter weights $w_1\in A_p(\R^{n}\times \R^m)$, $w_2\in A_q(\R^{n}\times \R^m)$ with $w=w_1^{r/p}w_2^{r/q}$.
In the unweighted case we also have the mixed-norm estimates
$$
\|T(f, g)\|_{L^{r_1}(\R^n; L^{r_2}(\R^m))} \lesssim \|f\|_{L^{p_1}(\R^n; L^{p_2}(\R^m))}\|g\|_{L^{q_1}(\R^n; L^{q_2}(\R^m))}
$$
for all $1 < p_i, q_i \le \infty$ and $1/2 < r_i < \infty$ with $1/p_i + 1/q_i = 1/r_i$, except that if $r_2 < 1$ we have
to assume $\infty \not \in \{p_1, q_1\}$.
\end{thm}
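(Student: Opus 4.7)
The plan is to combine the previous work of \cite{LMV} with new weighted estimates for exactly the paraproduct pieces that were excluded there. Concretely, by the dyadic representation theorem for bilinear bi-parameter CZOs from \cite{LMV}, the operator $T$ can be written as an average over random dyadic grids of bilinear bi-parameter dyadic model operators: shifts, partial paraproducts, and full paraproducts. The weighted $L^p(w_1)\times L^q(w_2)\to L^r(w)$ bounds for all model operators \emph{except} the full paraproducts --- whose symbols are $T(1,1)$ and its partial/full adjoint analogues, which a priori live in an unweighted bi-parameter product BMO space --- were already proved in \cite{LMV}. So everything reduces to showing that bilinear bi-parameter paraproducts with symbols in unweighted product BMO map $L^p(w_1)\times L^q(w_2)\to L^r(w)$ under the stated bi-parameter $A_p$, $A_q$ hypotheses.

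\textbf{Extension of Muckenhoupt--Wheeden to product BMO.} The main new ingredient is an extension of the classical Muckenhoupt--Wheeden theorem, which says that for $w\in A_\infty(\R^n)$ the weighted $\BMO$ norm (in which averages are taken against $w\,dx$) is comparable to the standard $\BMO$ norm. I would prove the bi-parameter analogue: for a bi-parameter weight $w\in A_\infty(\R^n\times\R^m)$, the product $\BMO$ norm defined via $w\,dx$-averages in Chang--Fefferman fashion is comparable to the standard product $\BMO$ norm. The anticipated obstacle is that product $\BMO$ is markedly more delicate than one-parameter $\BMO$ --- the usual John--Nirenberg self-improvement on which Muckenhoupt--Wheeden's original argument rests simply does not hold here. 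I would circumvent this by passing through a dyadic/square-function characterization of product $\BMO$ in open sets, combined with the sharp reverse H\"older and $A_\infty$ properties of bi-parameter weights, to upgrade rectangle-by-rectangle (or open-set-by-open-set) estimates from the unweighted to the weighted setting and back.

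\textbf{Weighted paraproduct estimates and mixed-norm consequences.} With the product Muckenhoupt--Wheeden equivalence in hand, the weighted bound for a bilinear bi-parameter paraproduct with symbol $b$ in unweighted product $\BMO$ follows by the standard duality strategy: pair the tri-linear form against $h\in L^{r'}(w^{1-r'})$, apply H\"older together with weighted bi-parameter square-function and maximal-function bounds to reduce matters to controlling $b$ in a \emph{weighted} product $\BMO$ norm associated to an $A_\infty$ weight built from $w_1,w_2,w$, and then use Step~2 to pass back to the unweighted norm. Together with Step~1 this proves the weighted bound on some initial Banach tuple $(p_0,q_0,r_0)$; the full range $1<p,q<\infty$, $1/2<r<\infty$ and the unweighted mixed-norm estimates then follow from the bilinear multi-parameter extrapolation machinery in the spirit of \cite{LMV, LMO, LMMOV, Nieraeth}, which is insensitive to the internal structure of $T$ once a single weighted initial estimate is available.
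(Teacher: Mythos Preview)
Your high-level strategy matches the paper's exactly: reduce via the representation theorem of \cite{LMV} to the full paraproducts, prove a product-BMO analogue of Muckenhoupt--Wheeden, use it to get the weighted paraproduct bound at a single Banach tuple, then extrapolate to the full range and to mixed norms. One technical point in your Step~2 is off, though. You write that ``the usual John--Nirenberg self-improvement \ldots\ simply does not hold here'' and propose to circumvent it via reverse H\"older. The paper does the opposite: it \emph{proves} a weighted product John--Nirenberg (Proposition~\ref{prop:JN}), showing that $\|A\|_{\BMO_{\textup{prod}, w}(p)}$ is independent of $p\in(0,\infty)$ for any bi-parameter $w\in A_\infty$. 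The key input is not reverse H\"older but R.~Fefferman's nontrivial bound $\|M^w_{\calD}f\|_{L^p(w)}\lesssim\|f\|_{L^p(w)}$ for the $w$-weighted strong maximal function (Lemma~\ref{lem:wm}). With John--Nirenberg available, the equivalence $\|A\|_{\BMO_{\textup{prod}}}\sim\|A\|_{\BMO_{\textup{prod}, w}}$ follows by testing the known $L^s(w)$-bound of the \emph{linear} paraproduct $\Pi_A$ on $1_\Omega$ (one direction) and a level-set argument involving both $M_\calD$ and $M^w_\calD$ (the other).

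Your Step~3 is also organised differently in the paper. Instead of dualising against $h\in L^{r'}(w^{1-r'})$ --- awkward since $w=w_1^{r/p}w_2^{r/q}$ is only guaranteed to be in $A_{2r}$, not $A_r$ --- the paper splits the nine full-paraproduct symmetries into two cases. When some slot carries a full average $\langle f_i\rangle_R$, the bilinear estimate decouples into a maximal-function bound times a \emph{linear} weighted paraproduct bound, already known. In the remaining symmetry the paper works directly at $r_0=1$: the $L^1(w)$ norm unfolds as $\sum_R |a_R|\langle w\rangle_R |b_R|$, and the Muckenhoupt--Wheeden equivalence (in the form of Corollary~\ref{cor:main}) together with $S_{\calD,M}^i$ bounds finishes the estimate with no dual function at all.
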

Bilinear weights pose a problem with duality: notice e.g. that if $w_1, w_2 \in A_4$ then $w := w_1^{1/2} w_2^{1/2} \in A_4$, while
we need to work in $L^2(w)$.  This is a relevant problem and often makes bilinear bi-parameter weighted estimates different
and harder than in the linear case. For the linear estimates see Fefferman--Stein \cite{FS} and Fefferman \cite{RF1, RF2}, and the much more recent Holmes--Petermichl--Wick \cite{HPW} that is rooted on the modern dyadic-probabilistic methods \cite{Ma1}.
 In particular, already some linear paraproduct estimates depend on suitable $H^1$-$\BMO$ type duality arguments, and,
for this reason, we could not previously handle weighted estimates for certain model operators. This led to the restriction on the class
of CZOs. We now remove this restriction by developing some new theory for the product $\BMO$ space of Chang and Fefferman \cite{CF1, CF2}.
Theorem \ref{thm:main1} below is an extension of a classical result of Muckenhoupt and Wheeden \cite{MW} to the product BMO setting -- it says
that certain weighted product $\BMO$ spaces are actually the same as the unweighted product $\BMO$ space.
This gives a useful way to construct
objects in some genuinely weighted product $\BMO$ spaces by starting with an object in the unweighted product $\BMO$.
We prove this result of independent interest in its full generality, and apply its special case to prove our bilinear bi-parameter weighted estimates.

Finally, we present another application of Theorem \ref{thm:main1}. This concerns the recently hot topic of two-weight estimates
for commutators of SIOs. The classical result of Coifman--Rochberg--Weiss \cite{CRW}
showed that
$$
\|b\|_{\BMO} \lesssim \|[b,T]\|_{L^p \to L^p} \lesssim \|b\|_{\BMO}, \textup{ where } [b,T]f := bTf - T(bf),
$$
for a class of non-degenerate one-parameter SIOs $T$. Commutator estimates then e.g. yield
factorizations for Hardy functions, imply div-curl lemmas relevant in compensated compactness, and are connected to
the Jacobian problem $Ju = f$ in $L^p$ (see Hyt\"onen \cite{HyCom}). The two-weight problem concerns
estimates from $L^p(\mu)$ to $L^p(\lambda)$ for two different weights $\mu, \lambda$ and has attracted
significant interest after the recent work by Holmes--Lacey--Wick \cite{HPW}. For the product space
versions of these two-weight estimates see the recent works \cite{EA, HPW, LMV:Bloom, LMV:Bloom2}.
In this paper we remove the final restriction on the most general estimate thus far, \cite[Theorem 1.5]{EA}, and prove a two-weight upper bound 
on $[T_1, [T_2, \ldots [b, T_k]]]$, where each $T_i$ can be a completely general multi-parameter CZO as in \cite{Ou}.
Previously, in certain situations it was explicitly required in \cite{EA} that $T_i$ can be at most bi-parameter, or else
it needs to satisfy some extra cancellation.
\begin{thm}\label{thm:main3}
Let $\R^d = \prod_{i=1}^m \R^{d_i}$ be a product space of $m$-parameters. Let $k \le m$ be given and $\calI = \{\calI_1, \ldots, \calI_k\}$ be a partition of $\{1, \ldots, m\}$.
Let $k_i = \# \calI_i$, and for each each $i = 1, \ldots, k$ let $T_i$ be a multi-parameter CZO as in \cite{Ou}, which is defined in the product space $\prod_{j \in \calI_i} \R^{d_j}$
and has $k_i$ parameters. Let $b \colon \R^d \to \C$, $p \in (1, \infty)$, $\mu, \lambda \in A_p(\R^d)$ be $m$-parameter weights and $\nu = \mu^{1/p} \lambda^{-1/p}$ be the
associated Bloom weight. Then we have
$$
\| [T_1, [T_2, \ldots [b, T_k]]] \|_{L^p(\mu) \to L^p(\lambda)} \lesssim \|b\|_{\bmo^{\calI}(\nu)},
$$
where in this formula every $T_i$ is extended to act on the whole product space $\R^d$, and $\bmo^{\calI}(\nu)$ is a suitable weighted little product BMO space
as in \cite{EA}.
\end{thm}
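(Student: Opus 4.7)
The plan is to follow the dyadic-probabilistic Bloom strategy of \cite{EA, LMV:Bloom, LMV:Bloom2}, with the single new ingredient being that Theorem \ref{thm:main1} is used to supply the weighted paraproduct estimate that previously forced the bi-parameter restriction in \cite[Theorem 1.5]{EA}. As a first step I would apply, for each $i$, Ou's dyadic representation theorem \cite{Ou} in the product space $\prod_{j\in\calI_i}\R^{d_j}$: up to terms handled by standard weak boundedness/$\BMO$ testing, $T_i$ equals an average over independent random dyadic systems of $\sum_{\vec k_i} 2^{-\alpha \max_j k_{i,j}} S_{i,\vec k_i}$, where each $S_{i,\vec k_i}$ is a shift, a full paraproduct, or a partial paraproduct of complexity $\vec k_i$ adapted to the coordinate block $\calI_i$. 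Since the blocks $\calI_i$ are pairwise disjoint, the different randomizations and summations are independent; pulling them out and using linearity of the iterated commutator reduces everything to proving, for each fixed tuple of dyadic model operators and with at most polynomial loss in the complexities, the Bloom-type estimate
$$
\bigl\| [S_1, [S_2, \ldots, [b, S_k]]] \bigr\|_{L^p(\mu) \to L^p(\lambda)} \lesssim P(\vec k_1, \ldots, \vec k_k)\, \|b\|_{\bmo^\calI(\nu)}.
$$

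For each such fixed tuple I would then run the iterated Leibniz expansion. Because the $S_i$ act on disjoint parameter blocks, the inner commutator $[b, S_k]$ naturally peels off as a paraproduct/partial paraproduct of $b$ in the block $\calI_k$ composed with a mild modification of $S_k$; iterating the identity $[S_{k-1}, AB] = [S_{k-1},A]B + A[S_{k-1},B]$ produces a finite sum of compositions, each built out of the model operators $S_i$ (or shifts of comparable complexity) and paraproduct-type operators $\Pi^{\calI_i}_b$ adapted to the individual blocks $\calI_i$, inserted in various positions. This is exactly the template of \cite{EA}; only notation changes in the multi-parameter setting.

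Bounding each composition then splits into two ingredients: (i) standard weighted $A_p$ bounds for the dyadic model operators $S_i$, applied separately in $L^p(\mu)$ and in $L^p(\lambda)$ with at most polynomial-in-complexity constants; and (ii) a Bloom-type bound for the paraproducts $\Pi^{\calI_i}_b$ in terms of a suitable weighted little product $\BMO$ norm of $b$, controlled by $\|b\|_{\bmo^\calI(\nu)}$. The whole difficulty — and the precise reason \cite{EA} had to assume that each $T_i$ is at most bi-parameter or carries extra cancellation — lies in ingredient (ii) when the block $\calI_i$ contains three or more parameters: the required weighted Chang--Fefferman type paraproduct estimates were simply not available in that regime. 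Theorem \ref{thm:main1}, the multi-parameter extension of the Muckenhoupt--Wheeden identification \cite{MW} to product $\BMO$, supplies exactly what is missing, letting us pass between the weighted and the unweighted product $\BMO$ norms of $b$ (with the $A_p$ constants absorbed) and then invoke the classical unweighted paraproduct machinery in an arbitrary number of parameters. Feeding this into the decomposition, summing over the complexities against the exponential decay from the representation theorem, and averaging over the random grids completes the argument; no step besides (ii) presents any essentially new difficulty.
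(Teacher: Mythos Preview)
Your high-level reduction via \cite{Ou} and the commutator expansion from \cite{EA} is fine, but you have misdiagnosed both \emph{where} the bi-parameter restriction in \cite[Theorem 1.5]{EA} comes from and \emph{how} Theorem \ref{thm:main1} is used to lift it. The paraproducts of the commuting function $b$ --- the operators $A^{i_1,i_2}_{j_1,j_2}(b,\cdot)$ in the paper's notation, essentially your $\Pi^{\calI_i}_b$ --- are \emph{not} the obstruction: their Bloom bounds are already established in \cite[Lemma 3.1]{EA} in any number of parameters, because $b$ lives in \emph{little} product $\BMO$ (one-parameter $\BMO$ uniformly in the remaining variables), not in full Chang--Fefferman product $\BMO$. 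Your proposed application of Theorem \ref{thm:main1} to $b$ would not help anyway: Theorem \ref{thm:main1} identifies $\BMO_{\textup{prod}}$ with $\BMO_{\textup{prod},w}$, whereas the Bloom space governing the $b$-paraproducts is the genuinely different $\BMO_{\textup{prod}}(\nu)$.

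The real difficulty appears \emph{after} the $b$-paraproduct expansion, in the remainder $A^{1,3}_{3,3}(b,Pf)-P(A^{1,2}_{3,3}(b,f))$ when the model operator $P=S_i$ is a multi-parameter \emph{partial paraproduct} whose paraproduct component carries $(k_i-1)$-parameter product-$\BMO$ coefficients $(a_{(K_j),I_1,J_1})$. In \cite{EA} this term was controlled by a sparse-domination argument that only works when that paraproduct component is one-parameter, forcing $k_i\le 2$. The paper replaces that step by Lemma \ref{lem:mainLem}, which is proved via $A_\infty$-extrapolation from the weighted $H^1$--$\BMO$ duality of Corollary \ref{cor:main}; and Corollary \ref{cor:main} is exactly where Theorem \ref{thm:main1} enters --- applied to the \emph{unweighted} product-$\BMO$ coefficients of the model operator, not to $b$ (the paper says this explicitly). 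So your ingredient (ii) is aimed at the wrong object; as written the proposal does not touch the term that actually caused the restriction in \cite{EA}.
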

 We manage this extension in Section \ref{sec:MulBloom} by using the full power of Theorem \ref{thm:main1}. For the various definitions 
see Section \ref{sec:def} and Section \ref{sec:MulBloom}.
 
\subsection{In-depth discussion}\label{sec:indepth}
In \cite[Theorem 1.5]{EA} two-weight commutator 
estimates were shown for $$[T_1, [T_2, \ldots [b, T_k]]],$$ where each $T_i$ is a multi-parameter
CZO as in Ou \cite{Ou} (a multi-parameter dyadic representation theorem generalising \cite{Ma1}).
However, it was required that either $T_i$ satisfies some cancellation (is paraproduct free)
or is of at most two parameters. In Theorem \ref{thm:main3} we remove the restrictions completing the upper bound theory of multi-parameter
two-weight commutator bounds. We stress that this is philosophically different to what we do in Theorem \ref{thm:main2}
-- there we aim for complete \emph{bi-parameter} theory in the strictly different bilinear context by removing some previous cancellation restrictions present in the bilinear bi-parameter theory \cite{LMV}. In \cite{EA} no restrictions appear on the bi-parameter case to begin with -- restrictions are needed only if some of the operators $T_i$ are $m$-parameter, $m \ge 3$. 

The passage from bi-parameter to multi-parameter is known to be very non-trivial in some occasions. In \cite{EA} difficulties arose 
as certain so-called partial paraproduct terms (model operators appearing in the dyadic representation given by \cite{Ou}) seemed to require a sparse domination type treatment already
used in \cite{LMV:Bloom}, and this could only be done if the paraproduct component of the partial paraproduct had only
one parameter. In tri-parameter theory (and general multi-parameter theory) it was then necessary to assume that such partial paraproducts vanish (that is, the SIOs have
some extra cancellation), as otherwise partial paraproducts with multi-parameter paraproduct components arise.

All of this is somehow connected to the very innocent result of \cite[Lemma 6.7]{LMV}.
This particular one-parameter lemma is proved using sparse domination
and can be extrapolated to get useful vector-valued estimates, which we need in various places in \cite{LMV, LMV:Bloom}.
We now know how to prove some strong enough analogs
of \cite[Lemma 6.7]{LMV} without sparse domination and using Theorem \ref{thm:main1} instead, and this is extendable
to multi-parameter. See Lemma \ref{lem:mainLem} below. This allows us to remove the restrictions present in \cite{EA}.
The reader might still reasonably wonder would it now be easy enough to state a multi-parameter version of Theorem \ref{thm:main2} as well.
The short answer is that this might still require more work. This is because the linear two-weight theory only
requires some special versions of \cite[Lemma 6.7]{LMV}, where averages are outside as in Lemma \ref{lem:mainLem}, but
the weighted estimates for bilinear bi-parameter partial paraproducts \cite[Proposition 6.11]{LMV} use all of the symmetries of \cite[Lemma 6.7]{LMV}.

\section{Notations and preliminaries}\label{sec:def}
\subsection*{Basic notation}
Throughout this paper $A\lesssim B$ means that $A\le CB$ with some constant $C$ that we deem unimportant to track at that point.
In particular, we often do not track the dependence on the weight constants.
We write $A\sim B$ if $A\lesssim B\lesssim A$.

Before Section \ref{sec:MulBloom} we work in the bi-parameter setting in the product space $\R^{n+m}=\R^n\times \R^m$. We write $x=(x_1, x_2)$ with $x_1\in \R^n$ and $x_2\in \R^m$ and for $f: \R^{n+m}\to \mathbb C$ and $h:\R^n\to\mathbb C$, we define
\[
\langle f, h\rangle_1(x_2):=\int_{\R^n} f(x_1, x_2)h(x_1) \ud x_1.
\]
In Section \ref{sec:MulBloom}, which is the only place in the paper where we do multi-parameter theory (as opposed to bi-parameter),
some additional notation will be introduced.

\subsection*{Haar functions}
We denote a dyadic grid in $\R^n$ by $\calD^n$ and a dyadic grid in $\R^m$ by $\calD^m$. We often write $\calD = \calD^n \times \calD^m$
for the related dyadic rectangles.

For an interval $I \subset \R$ we denote by $I_{l}$ and $I_{r}$ the left and right
halves of $I$, respectively. We define $h_{I}^0 = |I|^{-1/2}1_{I}$ and $h_{I}^1 = |I|^{-1/2}(1_{I_{l}} - 1_{I_{r}})$.
Let now $Q = I_1 \times \cdots \times I_n \in \calD^n$, and define the Haar function $h_Q^{\eta}$, $\eta = (\eta_1, \ldots, \eta_n) \in \{0,1\}^n$, by setting
\begin{displaymath}
h_Q^{\eta} = h_{I_1}^{\eta_1} \otimes \cdots \otimes h_{I_n}^{\eta_n}.
\end{displaymath}
If $\eta \ne 0$ the Haar function is cancellative: $\int h_Q^{\eta} = 0$. We exploit notation by suppressing the presence of $\eta$, and write $h_Q$ for some $h_Q^{\eta}$, $\eta \ne 0$. If $R = I \times J \in \calD = \calD^n \times \calD^m$, we define
$h_R = h_I \otimes h_J$.

\subsection*{Weights}
A weight $w(x_1, x_2)$ (i.e. a locally integrable a.e. positive function) belongs to the bi-parameter $A_p(\R^n \times \R^m)$, $1 < p < \infty$, if
$$
[w]_{A_p(\R^n \times \R^m)} := \sup_{R} \frac 1{|R|}\int_R w   \Bigg( \frac 1{|R|}\int_R w^{1-p'}\Bigg)^{p-1} < \infty,
$$
where the supremum is taken over $R = I \times J$, where $I \subset \R^n$ and $J \subset \R^m$ are cubes
with sides parallel to the axes (we simply call such $R$ rectangles).
We have
$$
[w]_{A_p(\R^n\times \R^m)} < \infty \textup { iff } \max\big( \esssup_{x_1 \in \R^n} \,[w(x_1, \cdot)]_{A_p(\R^m)}, \esssup_{x_2 \in \R^m}\, [w(\cdot, x_2)]_{A_p(\R^n)} \big) < \infty,
$$
and that $\max\big( \esssup_{x_1 \in \R^n} \,[w(x_1, \cdot)]_{A_p(\R^m)}, \esssup_{x_2 \in \R^m}\, [w(\cdot, x_2)]_{A_p(\R^n)} \big) \le [w]_{A_p(\R^n\times \R^m)}$, while
the constant $[w]_{A_p}$ is dominated by the maximum to some power.
We say $w\in A_\infty(\R^n\times \R^m)$ if
\[
[w]_{A_\infty(\R^n\times \R^m)}:=\sup_R \frac 1{|R|}\int_R w  \exp\Bigg( \frac1{|R|}\int_R \log (w^{-1})  \Bigg)<\infty.
\]
It is well-known (see e.g. \cite[Section 7]{DMO}) that
$$A_\infty(\R^n\times \R^m)=\bigcup_{1<p<\infty}A_p(\R^n\times \R^m).$$
We do not have any explicit use for the $A_{\infty}$ constant. The $w \in A_{\infty}$ assumption
can always be replaced with the explicit assumption $w \in A_s$ for some $s \in (1,\infty)$, and
then estimating everything with a dependence on $[w]_{A_s}$.

Of course, $A_p(\R^n)$ is defined similarly as $A_p(\R^n \times \R^m)$ -- just take the supremum over cubes $Q$.
A modern reference for the basic theory of bi-parameter weights is e.g. \cite{HPW}.

\subsection*{Square functions and maximal functions}
Given $f \colon \R^{n+m} \to \C$ and $g \colon \R^n \to \C$ we denote the dyadic maximal functions
by
$$
M_{\calD^n}g := \sup_{I \in \calD^n}  1_I \bla |g|\bra_I \, \textup{ and } \,
M_{\calD} f:= \sup_{R \in \calD}    1_R \bla |f|\bra_R,
$$
where $\bla f \bra_A = |A|^{-1} \int_A f$. We can e.g. write $\bla f \bra_I^1$ if $I \subset \R^n$ and we average only over the first parameter.
We also set $M^1_{\calD^n} f(x_1, x_2) =  M_{\calD^n}(f(\cdot, x_2))(x_1)$. The operator $M^2_{\calD^m}$ is defined similarly. The weighted maximal function is defined by
\[
M^w_{\calD} f:= \sup_{  R\in \calD}1_R \bla |f|\bra_R^w,
\]
where $\bla |f|\bra_R^w = w(R)^{-1} \int_R |f|w$. We require the following very nice result of Fefferman \cite{RF}.
For a modern reference see \cite[Proposition B.1]{LMV:Bloom}. Notice that in the bi-parameter setting this result is very non-trivial
as $w$ is not of tensor form.
\begin{lem}\label{lem:wm}
Let $w\in A_\infty(\R^n\times \R^m)$. Then for all $1<p\le \infty$ we have
\[
\|M^w_{\calD} f\|_{L^p(w)}\lesssim  \|f\|_{L^p(w)}.
\]
\end{lem}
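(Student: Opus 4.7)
The plan is to combine a trivial $L^\infty(w)$ endpoint with a weighted weak-type $(1,1)$ bound and then interpolate. For $p = \infty$ the bound is immediate: every weighted average $\bla |f|\bra_R^w$ is at most $\|f\|_{L^\infty(w)}$, hence $\|M^w_\calD f\|_{L^\infty(w)} \le \|f\|_{L^\infty(w)}$. Given a weighted weak-type $(1,1)$ estimate of the form
\[
w\bigl(\{x : M^w_\calD f(x) > \lambda\}\bigr) \le \frac{C}{\lambda} \int_{\R^{n+m}} |f|\, w,
\]
Marcinkiewicz interpolation against this $L^\infty(w)$ bound then yields $L^p(w)$ boundedness for every $1 < p < \infty$.

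The essential content of the lemma is therefore the weighted weak $(1,1)$ estimate in the bi-parameter setting. In one parameter it is a triviality: pick maximal dyadic cubes $Q$ on which $\bla |f|\bra_Q^w > \lambda$; by maximality these are pairwise disjoint, so summing $w(Q) \le \lambda^{-1}\int_Q |f|w$ finishes the proof. In the product setting the naive analogue breaks down because maximal dyadic rectangles need not be disjoint, and this is where the $A_\infty$ hypothesis on $w$ becomes essential -- without it one has classical Journ\'e-type counterexamples even for the unweighted strong maximal function.

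The main obstacle is thus the rectangle covering. I would exploit the $A_\infty$ assumption by first choosing $s \in (1,\infty)$ with $w \in A_s(\R^n \times \R^m)$, so that a reverse H\"older inequality is available for $w$. Then I would run a principal rectangle selection in the spirit of C\'ordoba--Fefferman: enumerate the candidate rectangles $R$ with $\bla |f|\bra_R^w > \lambda$, extract a subfamily that still covers the level set, and use the reverse H\"older inequality combined with a suitable enlargement of each selected rectangle in the two coordinate directions (in the style of the Journ\'e--Pipher machinery) to upgrade Lebesgue-measure overlap estimates into $w$-measure overlap estimates. Summing the trivial bound $w(R) \le \lambda^{-1}\int_R |f|w$ over the selected family, with overlap constants depending only on $[w]_{A_\infty}$ (equivalently on $[w]_{A_s}$), then produces the desired weak-type bound. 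This is the Fefferman argument from \cite{RF} in the streamlined form presented in \cite[Proposition B.1]{LMV:Bloom}, which I would follow.
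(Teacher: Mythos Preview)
The paper does not actually prove this lemma; it records it as a known result of Fefferman \cite{RF}, pointing to \cite[Proposition~B.1]{LMV:Bloom} for a modern treatment, and only remarks that the bi-parameter case is genuinely non-trivial because $w$ need not be of tensor form. Your proposal ends at exactly the same two references, so at the level of what the paper itself supplies there is nothing to compare: you and the paper agree that this is Fefferman's theorem and that the place to read the argument is \cite{RF} or \cite[Proposition~B.1]{LMV:Bloom}.

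Your informal sketch of the mechanism is in the right spirit --- trivial $L^\infty(w)$ endpoint, a weighted weak $(1,1)$ via a rectangle selection, then interpolation --- and you correctly flag that the failure of disjointness of maximal dyadic rectangles is the crux and that $A_\infty$ is what rescues the covering. The one place where your description is a bit off is the invocation of ``Journ\'e--Pipher enlargement'': Fefferman's argument does not enlarge rectangles in that sense. Rather, one runs a C\'ordoba--Fefferman selection to extract a subfamily with controlled \emph{Lebesgue} overlap, and then uses the $A_\infty$ comparison $w(E\cap R)/w(R) \lesssim (|E\cap R|/|R|)^{\delta}$ to transfer the overlap control to the $w$-measure. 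This is a minor inaccuracy in an otherwise sound outline, and since you explicitly say you would follow \cite[Proposition~B.1]{LMV:Bloom}, it does not constitute a gap.
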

Now define the square functions
$$
S_{\calD} f = \Bigg( \sum_{R \in \calD}  \big|\bla f, h_R\bra\big|^2 \frac{1_R}{|R|} \Bigg)^{1/2}, \,\, S_{\calD^n}^1 f =  \Bigg( \sum_{I \in \calD^n}  \frac{1_I}{|I|} \otimes \big|\bla f, h_I\bra_1\big|^2 \Bigg)^{1/2}
$$
and define $S_{\calD^m}^2 f$ analogously.
Define also
$$
S_{\calD, M}^1 f = \Bigg( \sum_{I \in \calD^n} \frac{1_I}{|I|} \otimes \big[M_{\calD^m} \bla f, h_I \bra_1\big]^2 \Bigg)^{1/2}, \,\, S_{\calD, M}^2 f = \Bigg( \sum_{J \in \calD^m} \big[M_{\calD^n} \bla f, h_J \bra_2\big]^2 \otimes \frac{1_J}{|J|}\Bigg)^{1/2}.
$$
We record the following basic weighted estimates, which are used repeatedly below.
\begin{lem}\label{lem:standardEst}
For $p \in (1,\infty)$ and $w \in A_p = A_p(\R^n \times \R^m)$ we have the weighted square function estimates
$$
\| f \|_{L^p(w)}
 \sim  \| S_{\calD} f\|_{L^p(w)}
\sim   \| S_{\calD^n}^1 f  \|_{L^p(w)}
\sim  \| S_{\calD^m}^2 f  \|_{L^p(w)}.
$$
Moreover, for $p, s \in (1,\infty)$ we have the Fefferman--Stein inequality
$$
\Bigg\| \Bigg( \sum_j |M f_j |^s \Bigg)^{1/s} \Bigg\|_{L^p(w)} \lesssim  \Bigg\| \Bigg( \sum_{j} | f_j |^s \Bigg)^{1/s} \Bigg\|_{L^p(w)}.
$$
Here $M$ can e.g. be $M_{\calD^n}^1$ or $M_{\calD}$. Finally, we have
$$
\| S_{\calD, M}^1 f\|_{L^p(w)} + \| S_{\calD, M}^2 f\|_{L^p(w)} \lesssim  \|f\|_{L^p(w)}.
$$
\end{lem}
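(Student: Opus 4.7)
The overall plan is to reduce every assertion to the well-known one-parameter weighted Littlewood--Paley theory by \emph{slicing}. The key structural input is the fact recalled immediately before the definition of $A_p(\R^n \times \R^m)$: if $w \in A_p(\R^n \times \R^m)$, then for a.e.\ $x_2$ one has $w(\cdot, x_2) \in A_p(\R^n)$ (and symmetrically in $x_1$), with a constant controlled by $[w]_{A_p(\R^n \times \R^m)}$. Consequently, any scalar or $\ell^2$-valued one-parameter weighted estimate can be applied slice-wise with a uniform constant and then integrated in the remaining variable via Fubini.

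For the one-direction equivalences $\|f\|_{L^p(w)} \sim \|S_{\calD^n}^1 f\|_{L^p(w)} \sim \|S_{\calD^m}^2 f\|_{L^p(w)}$ I apply the classical one-parameter weighted Littlewood--Paley inequality to $f(\cdot, x_2)$ with weight $w(\cdot, x_2)$ and integrate in $x_2$; the second estimate is symmetric. For $\|f\|_{L^p(w)} \sim \|S_{\calD} f\|_{L^p(w)}$ I set $f_I := |I|^{-1/2} (1_I \otimes \langle f, h_I \rangle_1)$, which yields the pointwise identities
$$(S_{\calD^n}^1 f)^2 = \sum_{I \in \calD^n} |f_I|^2, \qquad (S_{\calD} f)^2 = \sum_{I \in \calD^n} (S_{\calD^m}^2 f_I)^2.$$
This reduces matters to an $\ell^2$-valued one-parameter weighted square function estimate in the second variable, which in turn follows from the scalar slice-wise estimate by Khintchine's inequality. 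One then concludes
$$\|S_{\calD} f\|_{L^p(w)} = \|(S_{\calD^m}^2 f_I)_I\|_{L^p(\ell^2; w)} \sim \|(f_I)_I\|_{L^p(\ell^2; w)} = \|S_{\calD^n}^1 f\|_{L^p(w)} \sim \|f\|_{L^p(w)}.$$

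The Fefferman--Stein inequality is handled in the same spirit: for $M = M_{\calD^n}^1$ it is just the slice-wise one-parameter Fefferman--Stein estimate (and $M_{\calD^m}^2$ is symmetric), while for $M = M_{\calD}$ I use the pointwise bound $M_{\calD} \le M_{\calD^n}^1 M_{\calD^m}^2$ and iterate. For the final mixed estimate, with $f_I$ as above, the pointwise identity
$$(S_{\calD,M}^1 f)^2 = \sum_{I \in \calD^n} (M_{\calD^m}^2 f_I)^2$$
combined with the just-established bi-parameter Fefferman--Stein inequality (for $M_{\calD^m}^2$ with $s=2$) yields
$$\|S_{\calD,M}^1 f\|_{L^p(w)} \lesssim \|(f_I)_I\|_{L^p(\ell^2; w)} = \|S_{\calD^n}^1 f\|_{L^p(w)} \sim \|f\|_{L^p(w)};$$
the $S_{\calD,M}^2$ estimate is symmetric.

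The main subtlety is the $\ell^2$-valued upgrade of the one-parameter weighted Littlewood--Paley estimate used in the iteration for $S_{\calD}$, but this is a standard consequence of the scalar slice-wise estimate via Khintchine randomization, so I do not expect any genuine obstacle.
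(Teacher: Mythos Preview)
Your argument is correct and is precisely the standard slicing-and-iteration scheme one expects here. The paper itself does not actually prove this lemma: it simply states ``See [LMV:Bloom, Lemma 2.1] for an indication on how to prove this standard result.'' Your sketch is thus more detailed than what the paper provides, and the route you take (uniform one-parameter slices, iteration via the pointwise identities for $f_I$, Khintchine/Marcinkiewicz--Zygmund for the $\ell^2$-valued upgrade, and the domination $M_{\calD} \le M_{\calD^n}^1 M_{\calD^m}^2$) is exactly the argument behind the cited reference. One minor remark: instead of Khintchine for the $\ell^2$-valued square function step, you could alternatively invoke Rubio de Francia extrapolation directly, which automatically yields the $\ell^s$-valued extensions from the scalar weighted bound; either way there is no obstacle.
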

See \cite[Lemma 2.1]{LMV:Bloom} for an indication on how to prove this standard result.
Of key importance to us is the following lower square function estimate valid for $A_{\infty}$ weights.
\begin{lem}\label{lem:LowerFS} There holds
$$
\|f\|_{L^p(w)} \lesssim \|S_{\calD^n}^1 f\|_{L^p(w)}
$$
and
$$
\|f\|_{L^p(w)} \lesssim \|S_{\calD} f\|_{L^p(w)}
$$
for all $p \in (0, \infty)$ and bi-parameter weights $w \in A_{\infty}$.
\end{lem}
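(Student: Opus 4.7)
The plan is to iterate the classical one-parameter $A_\infty$ lower square function estimate in each coordinate. Recall the one-parameter result: for $v \in A_\infty(\R^n)$, $p \in (0,\infty)$ and $g \colon \R^n \to \C$,
\begin{equation*}
\|g\|_{L^p(v)} \lesssim \|S_{\calD^n} g\|_{L^p(v)}.
\end{equation*}
This is classical, proved via a good-$\lambda$ inequality between the dyadic maximal function and the dyadic square function (cf.\ Wilson, Chang--Wilson--Wolff, Muckenhoupt--Wheeden).

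\textbf{First estimate.} Since $w \in A_\infty(\R^n \times \R^m) = \bigcup_s A_s$, there is an $s \in (1,\infty)$ with $w \in A_s$, and as recorded above the slices $w(\cdot,x_2)$ lie in $A_s(\R^n)$ uniformly in $x_2$ with constant bounded by $[w]_{A_s(\R^n \times \R^m)}$. Apply the one-parameter lower bound to $g = f(\cdot, x_2)$ with weight $v = w(\cdot,x_2)$ and integrate over $x_2$ via Fubini to get
\begin{equation*}
\|f\|_{L^p(w)}^p = \int \!\!\int |f(x_1,x_2)|^p w(x_1,x_2)\, dx_1\, dx_2 \lesssim \int\!\!\int (S_{\calD^n} f(\cdot,x_2)(x_1))^p w(x_1,x_2)\, dx_1\, dx_2 = \|S_{\calD^n}^1 f\|_{L^p(w)}^p.
\end{equation*}

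\textbf{Second estimate.} Having the first, it is enough to show $\|S_{\calD^n}^1 f\|_{L^p(w)} \lesssim \|S_\calD f\|_{L^p(w)}$. Setting $F_I(x_2) := \langle f, h_I\rangle_1(x_2)$, we have
\begin{equation*}
S_{\calD^n}^1 f(x_1,x_2)^2 = \sum_I \tfrac{1_I(x_1)}{|I|}\, |F_I(x_2)|^2, \qquad S_\calD f(x_1,x_2)^2 = \sum_I \tfrac{1_I(x_1)}{|I|}\, (S_{\calD^m} F_I)(x_2)^2.
\end{equation*}
For each fixed $x_1$, apply an $\ell^2$-valued version of the one-parameter lower square function estimate in $x_2$ with the $A_\infty(\R^m)$ weight $w(x_1,\cdot)$, namely
\begin{equation*}
\int \Big(\sum_I c_I |g_I(x_2)|^2\Big)^{p/2} v(x_2)\, dx_2 \lesssim \int \Big(\sum_I c_I (S_{\calD^m} g_I)(x_2)^2\Big)^{p/2} v(x_2)\, dx_2, \qquad c_I \ge 0,
\end{equation*}
with $c_I = 1_I(x_1)/|I|$ and $g_I = F_I$, and then integrate in $x_1$ to conclude.

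\textbf{Main obstacle.} The technical heart is the $\ell^2$-vector-valued one-parameter $A_\infty$ lower square function bound in the full range $p \in (0,\infty)$. For $p > 1$ it follows from the scalar version by Rubio de Francia $A_\infty$-extrapolation. For $p \le 1$ duality is unavailable, and one must rerun the scalar good-$\lambda$ or Fefferman--Stein sharp function argument in the $\ell^2$-valued setting; the key point is that $\|(g_I)\|_{\ell^2}$ is sublinear and so the Calder\'on--Zygmund stopping-time decomposition driving the scalar proof transfers coordinate-wise, producing the same good-$\lambda$ comparison between the $\ell^2$-maximal function and the $\ell^2$-square function and hence the claimed bound.
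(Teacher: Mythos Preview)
Your approach is correct and is the standard iteration-plus-extrapolation route for this well-known inequality; the paper does not give its own proof here but simply refers to \cite[Section 6]{LMV}. One clarification: the $A_\infty$ extrapolation of Cruz-Uribe--Martell--P\'erez \cite{CUMP} (which the paper itself invokes later, in Lemma \ref{lem:mainLem}) already covers the full range $p \in (0,\infty)$ and yields the $\ell^2$-valued lower square function bound directly from the scalar one --- take $p_0 = 2$, where the $\ell^2$-valued estimate is trivially equivalent to the scalar estimate by Fubini, and then extrapolate to all $p$. Your separate good-$\lambda$ rerun for $p \le 1$ is therefore unnecessary; the duality obstruction you mention pertains to classical $A_p$ extrapolation, not to $A_\infty$ extrapolation.
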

See \cite[Section 6]{ LMV} for an explanation of this well-known inequality. This is important to us
as the weight $w=w_1^{r/p}w_2^{r/q}$ in Theorem \ref{thm:main2} is at least $A_{\infty}$ -- it is in fact $A_{2r}$.

\subsection*{Singular integrals}
A kernel $K \colon \R^d \times \R^d \setminus \{(x,y) \in \R^d \times \R^d \colon x =y \} \to \C$ is a standard Calder\'on-Zygmund kernel on $\R^d$
if we have
$$
|K(x,y)| \le \frac{C}{|x-y|^d}
$$
and, for some $\alpha \in (0,1]$,
\begin{equation*}
|K(x, y) - K(x', y)| + |K(y, x) - K(y, x')| \le C\frac{|x-x'|^{\alpha}}{|x-y|^{d+\alpha}}
\end{equation*}
whenever $|x-x'| \le |x-y|/2$. 

A singular integral operator (SIO) is a linear operator $T$ (initially defined, for example, on bounded and compactly supported functions) so that there is a standard kernel
$K$ for which
$$
\langle Tf ,g \rangle = \iint_{\R^d \times \R^d} K(x,y) f(y) g(x) \ud y \ud x
$$
whenever the functions $f$ and $g$ are nice and have disjoint supports. A Calder\'on--Zygmund operator (CZO) is an SIO $T$, 
which satisfies the $T1$ condition
$$
\int_I |T1_I| + \int_I |T^*1_I| \lesssim |I|
$$
for all cubes $I \subset \R^d$. Here $T^*$ is the linear adjoint of $T$. A $T1$ theorem says that an SIO is a CZO if and only if
it is bounded from $L^p \to L^p$ for all (equivalently for some) $p \in (1,\infty)$. 
We know a lot about the structure of a CZO $T$.
Indeed, we can represent $T$ with certain dyadic model operators (DMOs) -- see \cite{Hy1, Hy2}. The DMOs
take the concrete form of so-called dyadic shifts and paraproducts. Moreover, the $T1$ theorem is a consequence of the representation theorem.
We will indicate how a representation theorem looks soon.

A model of a bi-parameter singular integral operator in $\R^n \times \R^m$ is $T_1 \otimes T_2$, where
$T_1$ and $T_2$ are usual singular integrals in $\R^n$ and $\R^m$, respectively.
The general definition of a bi-parameter singular integral $T$ requires that 
$\langle Tf_1, f_2\rangle$, $f_i = f_i^1 \otimes f_i^2$, can be written using different kernel representations depending on whether
\begin{enumerate}
\item $\operatorname{spt} f_1^1 \cap \operatorname{spt}f_2^1 = \emptyset$ and $\operatorname{spt} f_{1}^2 \cap \operatorname{spt} f_{2}^2 = \emptyset$,
\item $\operatorname{spt} f_1^1 \cap \operatorname{spt}f_2^1 = \emptyset$ or
\item $\operatorname{spt} f_{1}^2 \cap \operatorname{spt}f_{2}^2 = \emptyset$.
\end{enumerate}
In the first case we have a so-called full kernel representation, while in cases $2$ and $3$
a partial kernel representations holds in $\R^n$ or $\R^m$, respectively. The $T1$ conditions are more complicated than in one-parameter -- for the complete definitions see \cite[Section 2]{Ma1}. As in the one-parameter case, we can represent a bi-parameter CZO -- a bi-parameter SIO satisfying the bi-parameter $T1$ conditions -- using bi-parameter DMOs.
We will only need these model operators in this paper, and they are defined when they are needed.
Everything in Theorem \ref{thm:main2} and Theorem \ref{thm:main3} is reduced, via representation theorems, to estimates of model operators.

The analogous multi-parameter theory is presented in \cite{Ou}. An inherent complication of these bi-parameter and multi-parameter representations is the presence not only of ``pure'' paraproducts and cancellative shifts, but also of their hybrid combinations -- partial paraproducts -- that are completely new compared to the one-parameter case. 

For Theorem \ref{thm:main2} we still need to discuss the bilinear theory.
 A heuristic model of a bilinear one-parameter SIO $T$ in $\R^d$ is
$T(f_1, f_2)(x) :=  U(f_1 \otimes f_2)(x,x)$, where $x \in \R^d$, $f_i \colon \R^d \to \C$, $(f_1 \otimes f_2)(x_1,x_2) = f_1(x_1) f_2(x_2)$
 and $U$ is a linear SIO in $\R^{2d}$.
In detail, a bilinear SIO $T$ has a kernel $K$ satisfying estimates that are obtained from the above heuristic via the linear estimates, and if $\operatorname{spt} f_i \cap \operatorname{spt} f_j = \emptyset$ for some $i,j$ then
$$
\langle T(f_1, f_2), f_3 \rangle =  \iiint_{\R^{3d}} K(x,y,z) f_1(y) f_2(z) f_3(x) \ud y \ud z \ud x.
$$

Finally, a model of a bilinear bi-parameter singular integral in $\R^n \times \R^m$ is
$$
(T_n \otimes T_m)(f_1 \otimes f_2, g_1 \otimes g_2)(x) := T_n(f_1, g_1)(x_1)T_m(f_2, g_2)(x_2),
$$
where $f_1, g_1 \colon \R^n \to \C$, $f_2, g_2 \colon \R^m \to \C$,
$x = (x_1, x_2) \in \R^{n+m}$ and $T_n$, $T_m$ are bilinear SIOs defined in $\R^n$ and $\R^m$, respectively.
See \cite[Section 3]{LMV} for the rather long definition in the full generality. We reiterate that we will only meet the DMOs in this paper.
This is because by \cite[Theorem 1.3]{LMV} we have that if
$T$ is a bilinear bi-parameter CZO, then
$$
\langle T(f_1,f_2), f_3\rangle = C_T \mathbb{E}_{\omega}\mathop{\sum_{k = (k_1, k_2, k_3) \in \Z_+^3}}_{v = (v_1, v_2, v_3) \in \Z_+^3} \alpha_{k, v} 
\bla U^{k,v}_{\omega}(f_1, f_2), f_3 \bra,
$$
where $\omega = (\omega_1, \omega_2)$ is associated to random dyadic grids $\calD_{\omega} = \mathcal{D}^n_{\omega_1} \times \mathcal{D}^m_{\omega_2}$,
 $C_T \lesssim 1$, the numbers $\alpha_{k, v} > 0$ decay exponentially in complexity $(k,v)$, and
$U^{k,v}_{\omega}$ denotes some bilinear bi-parameter dyadic model operator of complexity $(k,v)$
defined in the lattice $\mathcal{D}_{\omega}$. Crucially, we have shown the desired weighted estimates in \cite{LMV} for all but one type of 
model operators -- the remaining model operators are defined carefully in Section \ref{sec:bilinbiparproof} and the weighted estimate is proved for them.
This shows Theorem \ref{thm:main2}. 

\section{Weighted BMO Spaces}\label{sec:BMO}
Let $\calD = \calD^n \times \calD^m$ be a lattice of dyadic rectangles, $A = (a_R)_{R \in \calD}$ be a sequence of scalars and $\Omega \subset \R^{n+m}$. We define
$$
S_A(x) = \Bigg(\sum_{R \in \calD} |a_R|^2 \frac{1_R(x)}{|R|} \Bigg)^{1/2} \qquad \textup{and} \qquad
S_{A, \Omega}(x) = \Bigg(\sum_{ \substack{ R \in \calD \\ R \subset \Omega}} |a_R|^2 \frac{1_R(x)}{|R|} \Bigg)^{1/2}.
$$
Let $p \in (0,\infty)$. Define
$$
\|A\|_{\BMO_{\textup{prod}}(p)} = \sup_{\Omega} \frac{1}{|\Omega|^{1/p}} \|S_{A, \Omega}\|_{L^p},
$$
where $\Omega$ is open and $0 < |\Omega| < \infty$.
There are many possibilities how to define a weighted version. The following is not the `correct' definition for many things. Nonetheless,
it will be of key use to us.
Thus, let $w \in A_{\infty}$ and set
$$
\|A\|_{\BMO_{\textup{prod},w}(p)} = \sup_{\Omega} \frac{1}{w(\Omega)^{1/p}} \|S_{A, \Omega}\|_{L^p(w)}.
$$
We set $\|A\|_{\BMO_{\textup{prod}}} = \|A\|_{\BMO_{\textup{prod}}(2)}$ and $\|A\|_{\BMO_{\textup{prod}, w}}
= \|A\|_{\BMO_{\textup{prod}, w}(2)}$.
The weight turns out not to play a role here -- that is, we have
$\|A\|_{\BMO_{\textup{prod}}} = \|A\|_{\BMO_{\textup{prod}, w}}$ for all bi-parameter weights $w \in A_{\infty}$. To prove this
we need the bi-parameter John-Nirenberg.
The unweighted version $\|A\|_{\BMO_{\textup{prod}}} \sim \|A\|_{\BMO_{\textup{prod}}(p)}$ is well-known.
However, we need to know it in the following form, which is a priori stronger. The proof is similar, though, but
requires the very non-trivial Lemma \ref{lem:wm}.
\begin{prop}\label{prop:JN}
For all bi-parameter weights $w \in A_{\infty}$ we have
$$
\|A\|_{\BMO_{\textup{prod}, w}} \sim \|A\|_{\BMO_{\textup{prod}, w}(p)}, \qquad 0 < p < \infty.
$$
\end{prop}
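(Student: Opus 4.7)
\textbf{My plan} is to mirror the standard Chang--Fefferman product BMO John--Nirenberg argument, using Lemma \ref{lem:wm} as a drop-in replacement for the unweighted $L^p$-boundedness of the dyadic strong maximal function that appears in the classical proof. One half of the equivalence is essentially free: since $S_{A,\Omega}$ is supported in $\Omega$, H\"older's inequality on its support gives
\[
\|S_{A,\Omega}\|_{L^2(w)} \le \|S_{A,\Omega}\|_{L^p(w)}\, w(\Omega)^{1/2-1/p}, \qquad p > 2,
\]
together with the symmetric bound $\|S_{A,\Omega}\|_{L^p(w)} \le \|S_{A,\Omega}\|_{L^2(w)}\, w(\Omega)^{1/p-1/2}$ for $p < 2$. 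After dividing by $w(\Omega)^{1/p}$ these yield $\|A\|_{\BMO_{\textup{prod},w}} \le \|A\|_{\BMO_{\textup{prod},w}(p)}$ when $p > 2$ and its reverse when $p < 2$.

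For the non-trivial direction I would first establish, for every $q > 0$, a weighted distributional inequality
\[
w\bigl(\{S_{A,\Omega} > \lambda\}\bigr) \lesssim \lambda^{-q} \|A\|_{\BMO_{\textup{prod},w}}^q\, w(\Omega),
\]
which by layer-cake integration yields $\|A\|_{\BMO_{\textup{prod},w}(p)} \lesssim \|A\|_{\BMO_{\textup{prod},w}}$ for every $p < q$, hence for every $p \in (0,\infty)$ since $q$ is arbitrary. To prove the distributional bound I plan a Chang--Fefferman/Pipher-type stopping-time argument: for $E_\lambda = \{S_{A,\Omega} > \lambda\}$ choose the maximal dyadic rectangles $R \subset \Omega$ satisfying $\bla 1_{E_\lambda}\bra_R^w \ge \gamma$, form the enlarged exceptional open set $\Omega_\lambda^* = \{M_\calD^w 1_{E_\lambda} > \gamma\}$, and use the $\BMO_{\textup{prod},w}$ hypothesis on the residual collection of rectangles in $\Omega \setminus \Omega_\lambda^*$ to upgrade the level from $\lambda$ to $2\lambda$ at the cost of a factor that can be made strictly less than $1$. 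Iterating over dyadic levels $\lambda = 2^k \lambda_0$ then yields the required geometric decay.

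\textbf{The main obstacle} is that in the bi-parameter setting the weight $w$ is not of tensor-product form, so the classical fibre-by-fibre reduction used in the unweighted Chang--Fefferman proof is unavailable. This is precisely where Lemma \ref{lem:wm} is indispensable: it provides $\|M_\calD^w\|_{L^p(w) \to L^p(w)} < \infty$ for every $p \in (1,\infty]$, and therefore $w(\Omega_\lambda^*) \lesssim \gamma^{-p} w(E_\lambda)$ with a constant that can be driven below any prescribed threshold by choosing $\gamma$ close to $1$ and $p$ close to $1$. With this weighted strong maximal estimate in hand, the remaining bookkeeping follows the unweighted Chang--Fefferman template.
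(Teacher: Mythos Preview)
Your overall strategy is sound and is indeed the classical Chang--Fefferman iteration, with Lemma~\ref{lem:wm} replacing the unweighted strong maximal inequality exactly as you say. However, as written your argument has a genuine logical gap in the range $0<p<2$. Your distributional inequality
\[
w\bigl(\{S_{A,\Omega}>\lambda\}\bigr)\lesssim \lambda^{-q}\,\|A\|_{\BMO_{\textup{prod},w}}^{\,q}\,w(\Omega)
\]
only yields $\|A\|_{\BMO_{\textup{prod},w}(p)}\lesssim \|A\|_{\BMO_{\textup{prod},w}}$ for every $p$, but for $p<2$ this is precisely the direction you already obtained for free from H\"older. The inequality you still owe for $p<2$ is $\|A\|_{\BMO_{\textup{prod},w}}\lesssim \|A\|_{\BMO_{\textup{prod},w}(p)}$, and nothing in your outline addresses it. The fix is straightforward: the only place the $p=2$ norm enters your iteration is through the Chebyshev step $w(F_j)\le C_0^{-2}w(\Omega_j)$; replacing this by $w(F_j)\le C_0^{-p_0}w(\Omega_j)$, which follows from the definition of $\|A\|_{\BMO_{\textup{prod},w}(p_0)}$ applied to the open set $\Omega_j$, runs the whole iteration with $\|A\|_{\BMO_{\textup{prod},w}(p_0)}$ as input and gives $\|A\|_{\BMO_{\textup{prod},w}(p)}\lesssim \|A\|_{\BMO_{\textup{prod},w}(p_0)}$ for all $p$, hence the missing direction upon taking $p=2$, $p_0<2$.

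For comparison, the paper avoids iteration altogether: it fixes $p<q$ with $q>2$, forms $E=\{S_{A,\Omega}>N\|A\|_{\BMO_{\textup{prod},w}(p)}\}$ once, splits rectangles according to whether $w(R\cap E)>w(R)/2$, and proves the single absorption inequality $\|A\|_{\BMO_{\textup{prod},w}(q)}\le \tfrac12\|A\|_{\BMO_{\textup{prod},w}(q)}+C\|A\|_{\BMO_{\textup{prod},w}(p)}$. The ``bad'' rectangles are handled via $\wt E=\{M_{\calD}^w 1_E>1/2\}$ and Lemma~\ref{lem:wm} exactly as in your scheme; the ``good'' rectangles are handled by duality in $L^{q/2}(w)$ (this is why $q>2$ is imposed) together with the pointwise bound $S_{A,\Omega}\le N\|A\|_{\BMO_{\textup{prod},w}(p)}$ on $E^c$ and one more application of Lemma~\ref{lem:wm} to $M_{\calD}^w$. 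Because the input exponent $p$ is arbitrary, the $p<2$ case is built in from the start. Your iterative route gives the stronger distributional decay (arbitrary polynomial, hence essentially exponential), while the paper's one-step absorption is shorter and sidesteps the bookkeeping of nested exceptional sets; both rest on Lemma~\ref{lem:wm} in the same essential way. One small wording issue: ``maximal dyadic rectangles $R\subset\Omega$ with $\langle 1_{E_\lambda}\rangle_R^w\ge\gamma$'' is not well-posed in the bi-parameter lattice, but you do not actually need maximality since you immediately pass to the set $\{M_{\calD}^w 1_{E_\lambda}>\gamma\}$, which is the correct object.
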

\begin{proof}
By H\"older's inequality we only need to prove $\|A\|_{\BMO_{\textup{prod}, w}(q)} \lesssim \|A\|_{\BMO_{\textup{prod}, w}(p)}$ for
$p < q$ and $q > 2$. We may assume $a_R \ne 0$ for only finitely many $R \in \calD$.
We fix $\Omega$ and, for a large enough $N > 0$, denote
$
E:= \{ S_{A, \Omega} > N\|A\|_{\BMO_{\textup{prod}, w}(p)} \}.
$
We now have
\[
w(E)\le (N\|A\|_{\BMO_{\textup{prod}, w}(p)})^{-p}  \| S_{A, \Omega} \|_{L^p(w)}^p \le N^{-p} w(\Omega).
\]
Split $\calD = \mathcal R_1\cup \mathcal R_2$, where
\[
\mathcal R_1:= \{R\colon w(E\cap R)> w(R)/2\},\, \mathcal R_2:= \{R\colon w(E\cap R)\le w(R)/2\}.
\]
Notice that clearly for all $R\in \mathcal R_1$ we have
\[
R \subset \{M_{\calD}^w(1_E)> 1/2\}=:\tilde E.
\]
Since $w\in A_\infty$, by Lemma \ref{lem:wm} we have
\[
w(\tilde E)\lesssim \|M^w_{\calD}(1_E)\|_{L^2(w)}^2\lesssim w(E).
\]
We now fix $N$ so that we always have $w(\tilde E) \le w(\Omega)/2^q$, and then notice that
\begin{align*}
\Bigg\|\Bigg(\sum_{\substack{R\in \mathcal R_1 \\ R\subset \Omega}} |a_R|^2 \frac {1_R}{|R|}\Bigg)^{\frac 12} \Bigg\|_{L^q(w)}&\le
\|S_{A, \tilde E} \|_{L^q(w)} \\
& \le \|A\|_{\BMO_{\textup{prod}, w}(q)}w(\tilde E)^{\frac 1q}\le \frac 12 \|A\|_{\BMO_{\textup{prod}, w}(q)}w(\Omega)^{\frac 1q}.
\end{align*}
This is absorbable, so we now move on to consider the sum, where $R\in \mathcal R_2$. As $q > 2$ we may calculate
\begin{align*}
\Bigg\|\Bigg(\sum_{\substack{R\in \mathcal R_2 \\ R\subset \Omega}} |a_R|^2 \frac {1_R}{|R|}\Bigg)^{\frac 12} \Bigg\|_{L^q(w)}^2 &=\sup_{\|g\|_{L^{(q/2)'}(w)}=1} \Bigg| \sum_{\substack{R\in \mathcal R_2 \\ R\subset \Omega}} |a_R|^2 \bla gw\bra_R \Bigg| \\
&\le 2\sup_{\|g\|_{L^{(q/2)'}(w)}=1} \| 1_{E^c} S_{A, \Omega}^2 M_{\calD}^wg\|_{L^1(w)}\\
&\le 2 \sup_{\|g\|_{L^{(q/2)'}(w)}=1}\|1_{E^c} S_{A,\Omega} \|_{L^q(w)}^2\|M_{\calD}^wg\|_{L^{(q/2)'}(w)} \\
&\lesssim \|A\|_{\BMO_{\textup{prod}, w}(p)}^2 w(\Omega)^{2/q},
\end{align*}
where we used Lemma \ref{lem:wm} in the last step. The proof is done as we have shown that
$$
\|A\|_{\BMO_{\textup{prod}, w}(q)} \le  \frac 12 \|A\|_{\BMO_{\textup{prod}, w}(q)} +  C \|A\|_{\BMO_{\textup{prod}, w}(p)}.
$$
\end{proof}
\begin{thm}\label{thm:main1}
For all bi-parameter weights $w \in A_{\infty}$ we have
$$\|A\|_{\BMO_{\textup{prod}}} \sim \|A\|_{\BMO_{\textup{prod}, w}}.
$$
\end{thm}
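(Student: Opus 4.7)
The plan hinges on Fubini and on the proof architecture of Proposition \ref{prop:JN}.

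First I would observe that, since $S_{A,\Omega}$ is supported in $\Omega$, Fubini gives
$$
\|S_{A,\Omega}\|_{L^2(w)}^2 = \sum_{R \subset \Omega} |a_R|^2 \frac{w(R)}{|R|}, \qquad \|S_{A,\Omega}\|_{L^2}^2 = \sum_{R \subset \Omega} |a_R|^2,
$$
so the theorem reduces to the equivalence of two Carleson-type conditions on $(|a_R|^2)_R$. Moreover, applying Proposition \ref{prop:JN} both with the weight $w$ and with $w = 1$, I have the freedom to work with any $L^p(w)$ or $L^p$ exponent on either side, and I will exploit this flexibility.

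For the direction $\|A\|_{\BMO_{\textup{prod},w}} \lesssim \|A\|_{\BMO_{\textup{prod}}}$ I would fix an open $\Omega$, normalize $\|A\|_{\BMO_{\textup{prod}}} = 1$, and aim for $\|S_{A,\Omega}\|_{L^q(w)} \lesssim w(\Omega)^{1/q}$ for a large $q > 2$ (equivalent to the target by Proposition \ref{prop:JN}). I would mimic the split used in the proof of Proposition \ref{prop:JN}: pick a large $N$, form an exceptional set $E$, decompose $\{R \subset \Omega\} = \mathcal R_1 \cup \mathcal R_2$ according to whether $w(R \cap E) > w(R)/2$, and estimate the two contributions separately. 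The $\mathcal R_1$ piece is absorbed through the key observation $\mathcal R_1 \subset \tilde E := \{M^w_{\calD}1_E > 1/2\}$ together with $w(\tilde E) \lesssim w(E)$ from Lemma \ref{lem:wm}; the $\mathcal R_2$ piece is handled by dualizing in $L^{(q/2)'}(w)$ and using $M^w_{\calD}$ again, exactly as in Proposition \ref{prop:JN}. The reverse inequality $\|A\|_{\BMO_{\textup{prod}}} \lesssim \|A\|_{\BMO_{\textup{prod},w}}$ is symmetric: since $w \in A_p$ for some $p$, the dual weight $\sigma = w^{1-p'}$ is also in $A_{p'} \subset A_\infty$, so Lemma \ref{lem:wm} applies to $\sigma$, and the same split yields the other direction with the roles of the two measures interchanged.

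The hard part will be choosing the exceptional set $E$ so that the absorption closes. The naive choice $E = \{S_{A,\Omega} > N\}$ has $|E| \lesssim N^{-p}|\Omega|$ by unweighted John–Nirenberg (Proposition \ref{prop:JN} with $w = 1$, at any power $p$), but in the bi-parameter setting this does \emph{not} automatically give a bound on $w(E)/w(\Omega)$, because the classical $A_\infty$ property ``$|E|/|\Omega|$ small $\Rightarrow$ $w(E)/w(\Omega)$ small'' fails for general open $\Omega$ in product space. My expectation is that one should not try to bound $w(E)$ in terms of $|E|$ at all; instead, by taking $E$ as a level set of a suitable weighted maximal object (so that $w(E)$ is directly under control via Lemma \ref{lem:wm}) and then using the unweighted John–Nirenberg estimate only implicitly through the pointwise structure of $S_{A,\Omega}$, the absorption step can be made to yield a small fraction of the $L^q(w)$-norm we are estimating. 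Balancing the threshold $N$, the exponent $q$, and the power $p$ used in the unweighted tail estimate so that the $\mathcal R_1$ term is genuinely absorbable, is the delicate point, and is precisely where Lemma \ref{lem:wm} has to do the work that in the one-parameter linear setting is done by the $A_\infty$ property of cubes.
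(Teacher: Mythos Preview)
Your proposal correctly identifies the central obstacle but does not overcome it, and the paper takes an entirely different route. The absorption scheme from Proposition~\ref{prop:JN} needs an exceptional set $E$ with two simultaneous properties: (i) $w(E)$ is a small fraction of $w(\Omega)$, so that the $\mathcal R_1$ piece is absorbable, and (ii) $S_{A,\Omega}$ is pointwise bounded on $E^c$, so that the dualised $\mathcal R_2$ term closes via $\|1_{E^c}S_{A,\Omega}\|_{L^q(w)} \lesssim N\,w(\Omega)^{1/q}$. Property (ii) forces $E \supset \{S_{A,\Omega} > N\}$, while from the unweighted hypothesis you only get $|\{S_{A,\Omega} > N\}| \le N^{-p}|\Omega|$; passing from this to $w(E) \ll w(\Omega)$ is exactly the product $A_\infty$ failure you flag. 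Your proposed fix --- take $E$ as a superlevel set of some weighted maximal object --- does control $w(E)$ via Lemma~\ref{lem:wm}, but then nothing ties $E^c$ to a pointwise bound on $S_{A,\Omega}$, so (ii) is lost. The two requirements on $E$ pull against each other across the two measures, and you have not explained how to reconcile them; the same tension recurs in your symmetric treatment of the reverse inequality.

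The paper avoids this impasse altogether. For $\|A\|_{\BMO_{\textup{prod},w}} \lesssim \|A\|_{\BMO_{\textup{prod}}}$ it imports, as a black box, the weighted $L^s(w)$ boundedness of the linear bi-parameter paraproduct $\Pi_A f = \sum_R a_R \langle f\rangle_R h_R$ for $w \in A_s$, tests on $f = 1_\Omega$ to get $\|S_{A,\Omega}\|_{L^s(w)} \lesssim w(\Omega)^{1/s}$, and then invokes Proposition~\ref{prop:JN} once. For the converse it uses a layer-cake identity: writing $\sum_R |a_R|^2 \langle w\rangle_R \langle f\rangle_R^2 \langle g\rangle_R^w$ as an integral over superlevel sets of $(M_\calD f)^2 M^w_\calD g$ and applying the weighted $\BMO$ hypothesis on each level set yields the bound $\|A\|_{\BMO_{\textup{prod},w}}^2 \|(M_\calD f)^2 M^w_\calD g\|_{L^1(w)}$, which is controlled by H\"older and Lemma~\ref{lem:wm}; testing with $f = w^{-1/s}1_\Omega$, $g = w^{-1/(s/2)'}1_\Omega$ and the elementary inequality $\langle w^{-1/s}\rangle_R^2 \langle w^{2/s}\rangle_R \ge 1$ finishes. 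Neither direction attempts a John--Nirenberg-type absorption across the two measures.
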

\begin{proof}
Fix $w\in A_\infty$. Then there exists $s > 2$ so that $w\in A_s$.
We first prove $\|A\|_{\BMO_{\textup{prod}, w}}\lesssim \|A\|_{\BMO_{\textup{prod}}}$. Define the
linear bi-parameter paraproduct $$\Pi f = \Pi_A f =\sum_{R\in\calD} a_R \bla f\bra_R h_R.$$ It is well-known (see e.g.
\cite{HPW}) that
\[
\|\Pi f \|_{L^s(w)}\lesssim \|A\|_{\BMO_{\textup{prod}}}\|f\|_{L^s(w)}.
\]Then by Lemma \ref{lem:standardEst} we have
\begin{align*}
\Bigg\|\Bigg(\sum_{R\in\calD}|a_R|^2 \big|\bla f\bra_R\big|^2 \frac{1_R}{|R|} \Bigg)^{\frac 12}\Bigg\|_{L^s(w)}=\|S_\calD (\Pi f)\|_{L^s(w)}\lesssim \|A\|_{\BMO_{\textup{prod}}} \|f\|_{L^s(w)}.
\end{align*}Testing with $f=1_\Omega$ we get
\[
\|S_{A, \Omega} \|_{L^s(w)}\le \Bigg\|\Bigg(\sum_{R\in\calD}|a_R|^2 \big|\bla 1_\Omega\bra_R\big|^2 \frac{1_R}{|R|} \Bigg)^{\frac 12}\Bigg\|_{L^s(w)}\lesssim \|A\|_{\BMO_{\textup{prod}}}w(\Omega)^{\frac 1s}.
\]
This means that $\|A\|_{\BMO_{{\rm{prod}},w}(s)}\lesssim \|A\|_{\BMO_{\textup{prod}}}$. By Proposition \ref{prop:JN} we conclude that $\|A\|_{\BMO_{\textup{prod}, w}}\lesssim \|A\|_{\BMO_{\textup{prod}}}$.

It remains to prove $\|A\|_{\BMO_{\textup{prod}}}\lesssim \|A\|_{\BMO_{\textup{prod}, w}}$. For $0\le f\in L^s(w)$ and $0\le g\in L^{(s/2)'}(w)$ , we have
\begin{align*}
\sum_{R\in\calD}|a_R|^2\bla w\bra_R \bla f\bra_R^2\bla g\bra_R^w&= \int_0^\infty \sum_{\substack{R\in\calD\\ \langle f\rangle_R^2\langle g\rangle_R^w>t}}|a_R|^2\bla w\bra_R \ud t\\
&\le \int_0^\infty \sum_{\substack{R\in\calD\\ R\subset \{(M_{\calD}f)^2 M_{\calD}^w g>t\}}}|a_R|^2\bla w\bra_R \ud t\\
&\le  \|A\|_{\BMO_{\textup{prod}, w}}^2\int_0^\infty w(\{(M_{\calD}f)^2 M_{\calD}^w g>t\})  \ud t\\
&=\|A\|_{\BMO_{\textup{prod}, w}}^2 \|(M_{\calD}f)^2 M_{\calD}^w g\|_{L^1(w)}\\
&\le \|A\|_{\BMO_{\textup{prod}, w}}^2 \|M_{\calD}f\|_{L^s(w)}^2\|M_{\calD}^w g\|_{L^{(s/2)'}(w)}\\
&\lesssim \|A\|_{\BMO_{\textup{prod}, w}}^2 \|f\|_{L^s(w)}^2\|  g\|_{L^{(s/2)'}(w)},
\end{align*}where we have used Lemma \ref{lem:wm} in the last step. Testing the above inequality with $f=w^{-\frac 1s}1_\Omega$, $g=w^{-\frac 1{(s/2)'}}1_\Omega$ we get
\[
\sum_{\substack{R\in\calD\\R\subset \Omega}}|a_R|^2  \bla w^{-\frac 1s}\bra_R^2\bla w^{\frac 2s}\bra_R\lesssim \|A\|_{\BMO_{\textup{prod}, w}}^2 |\Omega|.
\]We conclude the proof by noticing that
$
1\le \bla w^{-\frac 1s}\bra_R^2\bla w^{\frac 2s}\bra_R.
$
\end{proof}
\begin{rem}
In the one-parameter case, the equivalence between $\BMO$ and $\BMO_w$, where $w\in A_\infty$, is due to Muckenhoupt and Wheeden \cite{MW}.
\end{rem}

Finally, we define the actual weighted product BMO by setting
$$
\|A\|_{\BMO_{\textup{prod}}(w)} = \sup_{\Omega} \frac{1}{w(\Omega)^{1/2}} \Bigg\| \Bigg(\sum_{ \substack{ R \in \calD \\ R \subset \Omega}} |a_R|^2 \frac{1_R(x)}{w(R)} \Bigg)^{1/2} \Bigg\|_{L^2(\ud x)}
= \sup_{\Omega}\Bigg(\frac{1}{w(\Omega)}\sum_{\substack{R\in \calD\\ R\subset \Omega}}  \frac {|a_R|^2}{\bla w \bra_R}\Bigg)^{\frac 12}.
$$
The previous theorem is of independent interest, but also yields the following key lemma.
\begin{lem}\label{lem:main}
If $A \in \BMO_{\textup{prod}}$ define $A_w = ( a_R \langle w \rangle_R )_{R \in \calD}$ for $w \in A_{\infty}$. Then we have
$$
\|A_w\|_{\BMO_{\textup{prod}}(w)} \sim \|A\|_{\BMO_{\textup{prod}}}.
$$
\end{lem}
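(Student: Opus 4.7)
The key observation is that the lemma follows essentially immediately from Theorem \ref{thm:main1} together with an explicit computation of the relevant norms. Let me explain the plan.

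First I would compute $\|A\|_{\BMO_{\textup{prod},w}}^2$ directly. By definition this is
$$
\|A\|_{\BMO_{\textup{prod},w}}^2 = \sup_{\Omega} \frac{1}{w(\Omega)} \|S_{A,\Omega}\|_{L^2(w)}^2,
$$
and expanding out the $L^2(w)$ norm of the square function gives
$$
\|S_{A,\Omega}\|_{L^2(w)}^2 = \int \sum_{\substack{R \in \calD \\ R \subset \Omega}} |a_R|^2 \frac{1_R(x)}{|R|} w(x) \ud x = \sum_{\substack{R \in \calD \\ R \subset \Omega}} |a_R|^2 \frac{w(R)}{|R|} = \sum_{\substack{R \in \calD \\ R \subset \Omega}} |a_R|^2 \langle w \rangle_R.
$$

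Next I would compare with $\|A_w\|_{\BMO_{\textup{prod}}(w)}^2$. Using the given reformulation of this norm,
$$
\|A_w\|_{\BMO_{\textup{prod}}(w)}^2 = \sup_{\Omega} \frac{1}{w(\Omega)} \sum_{\substack{R \in \calD \\ R \subset \Omega}} \frac{|a_R \langle w \rangle_R|^2}{\langle w \rangle_R} = \sup_{\Omega} \frac{1}{w(\Omega)} \sum_{\substack{R \in \calD \\ R \subset \Omega}} |a_R|^2 \langle w \rangle_R,
$$
which is the very same quantity. Thus the two norms coincide identically, not just up to constants.

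Finally I would invoke Theorem \ref{thm:main1} to conclude
$$
\|A_w\|_{\BMO_{\textup{prod}}(w)} = \|A\|_{\BMO_{\textup{prod},w}} \sim \|A\|_{\BMO_{\textup{prod}}},
$$
finishing the proof. There is no real obstacle here — all the work has been done in Theorem \ref{thm:main1}; the content of the lemma is the algebraic observation that the rescaling of the coefficients by $\langle w\rangle_R$ combined with the $\langle w\rangle_R^{-1}$ weight in the definition of $\BMO_{\textup{prod}}(w)$ exactly reproduces the weighted $\BMO_{\textup{prod},w}$ functional with the original coefficients. Once this cancellation is noted, the Muckenhoupt--Wheeden type identification proved in Theorem \ref{thm:main1} supplies the required equivalence.
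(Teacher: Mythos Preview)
Your proof is correct and follows exactly the paper's approach: the paper simply observes that $\|A_w\|_{\BMO_{\textup{prod}}(w)} = \|A\|_{\BMO_{\textup{prod}, w}}$ (calling this ``obvious'') and then invokes Theorem \ref{thm:main1}. You have merely spelled out the obvious equality in more detail.
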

\begin{proof}
Notice that
$$
\|A_w\|_{\BMO_{\textup{prod}}(w)} = \|A\|_{\BMO_{\textup{prod}, w}} \sim \|A\|_{\BMO_{\textup{prod}}}.
$$
Here the first equality is obvious and the second estimate is Theorem \ref{thm:main1}.
\end{proof}
\begin{cor}\label{cor:main}
For sequences of scalars $A = (a_R)$ and $B = (b_R)$ we have
$$
\sum_{R \in \calD} |a_R| \langle w \rangle_R |b_R| \lesssim  \|A\|_{\BMO_{\textup{prod}}} \|S_B\|_{L^1(w)}
$$
whenever $w \in A_{\infty}$.
\end{cor}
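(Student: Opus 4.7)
The plan is to reduce the corollary to a weighted Chang--Fefferman type $H^1$--$\BMO$ duality via Lemma \ref{lem:main}, and then prove that duality by a principal-rectangle stopping argument on the level sets of $S_B$, adapted to the weighted setting.

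Setting $a_{R,w}:=a_R\langle w\rangle_R$, the left-hand side becomes $\sum_R|a_{R,w}||b_R|$, and Lemma \ref{lem:main} gives $\|(a_{R,w})\|_{\BMO_{\textup{prod}}(w)}\sim\|A\|_{\BMO_{\textup{prod}}}$. So it is enough to prove, for any scalar sequence $C=(c_R)$, the bound
$$\sum_{R\in\calD}|c_Rb_R|\lesssim\|C\|_{\BMO_{\textup{prod}}(w)}\|S_B\|_{L^1(w)}.$$
To this end, I would set $\Omega_k:=\{S_B>2^k\}$, $\widetilde\Omega_k:=\{M_\calD 1_{\Omega_k}>1/2\}$, and
$$\mathcal R_k:=\big\{R\in\calD:|R\cap\Omega_k|\ge|R|/2,\ |R\cap\Omega_{k+1}|<|R|/2\big\}.$$
Since $S_B\ge|b_R||R|^{-1/2}$ on $R$, each $R$ with $b_R\ne0$ belongs to exactly one $\mathcal R_k$ (namely the largest $k$ for which the first inequality holds), and automatically $R\subset\widetilde\Omega_k$ for every $R\in\mathcal R_k$. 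Factorising $|c_R||b_R|=\big(|c_R|/\langle w\rangle_R^{1/2}\big)\big(\langle w\rangle_R^{1/2}|b_R|\big)$ and applying Cauchy--Schwarz level by level splits the sum into a $\BMO$-side and an $H^1$-side.

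The $\BMO$-side is controlled directly from the definition of $\BMO_{\textup{prod}}(w)$ applied to the open set $\widetilde\Omega_k$, combined with the bi-parameter weighted maximal bound (Lemma \ref{lem:wm}) yielding $w(\widetilde\Omega_k)\lesssim w(\Omega_k)$:
$$\sum_{R\in\mathcal R_k}\frac{|c_R|^2}{\langle w\rangle_R}\lesssim\|C\|_{\BMO_{\textup{prod}}(w)}^2 w(\Omega_k).$$
For the $H^1$-side I would fix $s\in(1,\infty)$ with $w\in A_s$; then the bi-parameter $A_s$ condition applied to the rectangle $R$ and the set $R\cap\Omega_{k+1}^c$ (which has Lebesgue measure at least $|R|/2$ for $R\in\mathcal R_k$) gives the reverse H\"older-type bound $w(R)\lesssim w(R\cap\Omega_{k+1}^c)$. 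Using this and the pointwise estimate $\sum_{R\in\mathcal R_k}|b_R|^2\frac{1_R}{|R|}\le S_B^2$, I obtain
$$\sum_{R\in\mathcal R_k}\langle w\rangle_R|b_R|^2\lesssim\int_{\widetilde\Omega_k\cap\Omega_{k+1}^c}S_B^2\,w\lesssim 4^k w(\Omega_k).$$
Combining the two estimates and summing in $k$ via the layer-cake identity $\sum_k 2^kw(\Omega_k)\sim\|S_B\|_{L^1(w)}$ completes the proof.

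The main obstacle is genuinely bi-parameter: both $w(\widetilde\Omega_k)\lesssim w(\Omega_k)$ and the rectangle reverse-H\"older step require controlling an arbitrary bi-parameter $A_\infty$ weight on shapes that are not of tensor form. The former relies crucially on Fefferman's non-trivial Lemma \ref{lem:wm}, while the latter uses only that the bi-parameter $A_s$ constant is defined over rectangles, so H\"older's inequality on a single rectangle still yields $w(E)\gtrsim(|E|/|R|)^sw(R)$. Once these ingredients are in hand, the stopping argument proceeds identically to the classical Chang--Fefferman $H^1$--$\BMO$ pairing proof, with $w(\Omega)$ simply replacing $|\Omega|$ throughout.
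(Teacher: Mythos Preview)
Your proposal is correct and follows the paper's route exactly: both apply Lemma \ref{lem:main} to reduce to the weighted $H^1$--$\BMO$ duality \eqref{eq:standardH1BMO}, which the paper simply cites from \cite{HPW} while you supply its proof via the standard Chang--Fefferman principal-rectangle argument. One small correction: the bound $w(\widetilde\Omega_k)\lesssim w(\Omega_k)$ uses the $L^s(w)$-boundedness of the \emph{unweighted} strong maximal function $M_\calD$ for $w\in A_s$ (contained in Lemma \ref{lem:standardEst}), not Lemma \ref{lem:wm}, which concerns the weighted maximal function $M^w_\calD$; so this step does not in fact rely on Fefferman's deep result.
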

\begin{proof}
Follows from the known, see e.g.  \cite[Proposition 4.1]{HPW}, weighted $H^1$-$\BMO$ duality
\begin{equation}\label{eq:standardH1BMO}
\sum_{R \in \calD} |a_R|  |b_R| \lesssim  \|A\|_{\BMO_{\textup{prod}}(w)} \|S_B\|_{L^1(w)}
\end{equation}
and Lemma \ref{lem:main}.
\end{proof}

\section{Proof of Theorem \ref{thm:main2}}\label{sec:bilinbiparproof}
A bilinear bi-parameter full paraproduct on a grid $\calD = \calD^n \times \calD^m$ has the form
\begin{equation}\label{eq:OneFullPara}
\Pi_A(f_1, f_2) = \Pi(f_1, f_2) = \sum_{R = I \times J \in \calD}
a_R
\Big \langle f_1, h_I \otimes \frac{1_J}{|J|} \Big \rangle
\bla f_2 \bra_{R}
\frac{1_I}{|I|} \otimes h_J,
\end{equation}
where $\|A\|_{\BMO_{\textup{prod}}} \le 1.$ What is important is that there are actually nine different types of full paraproducts -- the full paraproduct above corresponds to the tuples $\Big( h_I, \frac{1_I}{|I|}, \frac{1_I}{|I|} \Big)$ and
$\Big( \frac{1_J}{|J|}, \frac{1_J}{|J|}, h_J\Big)$, but the $h_I$ can be in any of the three slots and so can the $h_J$.

It follows from \cite{LMV} that to prove Theorem \ref{thm:main2} it suffices to prove the following weighted estimate
for the full paraproducts.
\begin{prop}
Let $1 < p, q < \infty$ and $1/2 < r < \infty$ satisfy $1/p+1/q = 1/r$, $w_1 \in A_p$ and $w_2 \in A_q$
be bi-parameter weights, and set $w := w_1^{r/p} w_2^{r/q}$.
Then we have
$$
\|\Pi(f_1, f_2)\|_{L^r(w)} \lesssim \|f_1\|_{L^p(w_1)} \|f_2\|_{L^q(w_2)}.
$$
\end{prop}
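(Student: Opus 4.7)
Plan. The proof proceeds by dualisation combined with Corollary~\ref{cor:main}, the weighted $H^1$--$\BMO$ pairing estimate that is the essential new input unavailable in \cite{LMV}. It suffices to treat the Banach case $r > 1$ for all bi-parameter weights $w_1 \in A_p$ and $w_2 \in A_q$: once the estimate is in hand for such a seed triple across the whole $A_p \times A_q$ class, the bilinear weighted extrapolation theorems of \cite{LMMOV, Nieraeth} deliver the full quasi-Banach range $1/2 < r < \infty$. All nine types of full paraproduct are structurally symmetric, so treating the representative \eqref{eq:OneFullPara} suffices.

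Assume $r > 1$ and dualise: $\|\Pi(f_1, f_2)\|_{L^r(w)} = \sup_g |\langle \Pi(f_1,f_2), gw\rangle|$ with $\|g\|_{L^{r'}(w)} \le 1$. Expanding the Haar-type pairing produces
\[
\langle \Pi(f_1,f_2), gw\rangle = \sum_{R = I\times J} a_R\,\alpha_R\,\beta_R\,\gamma_R,
\]
with $\alpha_R = \langle f_1, h_I \otimes \tfrac{1_J}{|J|}\rangle$ (Haar cancellation in the first parameter), $\beta_R = \langle f_2\rangle_R$ (pure average), and $\gamma_R = \langle gw, \tfrac{1_I}{|I|}\otimes h_J\rangle$ (Haar cancellation in the second parameter). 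Since $w \in A_{2r} \subset A_{\infty}$, applying Corollary~\ref{cor:main} to the BMO sequence $A = (a_R)_R$ and the test sequence $B = (\alpha_R \beta_R \gamma_R / \langle w\rangle_R)_R$ gives
\[
|\langle \Pi(f_1,f_2), gw\rangle| \lesssim \|A\|_{\BMO_{\textup{prod}}} \, \|S_B\|_{L^1(w)}.
\]

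The heart of the argument is the pointwise control of $S_B$. For $x \in R$ one has the estimates $|\alpha_R| \lesssim M^2_{\calD^m} \langle f_1, h_I\rangle_1(x)$ and $|\gamma_R| \lesssim M^1_{\calD^n}\langle gw, h_J\rangle_2(x)$, so the $|\alpha_R|^2|\gamma_R|^2 \tfrac{1_R}{|R|}$ portion of $S_B^2$, summed over $R = I \times J$, factorises pointwise as $(S^1_{\calD, M} f_1(x))^2 (S^2_{\calD, M}(gw)(x))^2$. Combining this with the identity $\langle f_2\rangle_R / \langle w\rangle_R = \langle f_2/w \rangle_R^w \le M^w_{\calD}(f_2/w)(x)$ valid for $x \in R$, one obtains a triple product pointwise bound for $S_B$. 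H\"older's inequality in $L^1(w)$ with exponents matching $p$, $q$, $r'$ (where $1/p + 1/q + 1/r' = 1$) together with Lemmas~\ref{lem:wm} and \ref{lem:standardEst} then bounds each factor by its natural weighted norm. The main obstacle will be precisely this last weight-bookkeeping step: since Corollary~\ref{cor:main} is necessarily applied with the single weight $w$, to land on the target $\|f_1\|_{L^p(w_1)} \|f_2\|_{L^q(w_2)}$ one must exploit the factorisation $w = w_1^{r/p}w_2^{r/q}$ and the full strength of the $A_p, A_q$ hypotheses on $w_1, w_2$ (rather than merely the $A_{2r}$ condition on $w$) to convert the mixed-weight norms back to the right weighted spaces. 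This is exactly the difficulty that forced the extra cancellation hypothesis $T(1,1) = 0$ in \cite{LMV}, and it is resolved here by the flexibility afforded by Theorem~\ref{thm:main1}.
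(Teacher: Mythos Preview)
Your proof has a genuine gap stemming from the claim that all nine full paraproducts are ``structurally symmetric.'' They are not, and the paper treats them in two distinct cases. In Case~1---where $f_1$ or $f_2$ carries a full average $\langle f_i\rangle_R$, which includes precisely the form \eqref{eq:OneFullPara} you chose---the paper does \emph{not} use Corollary~\ref{cor:main} at all: instead one applies the $A_\infty$ lower square function estimate (Lemma~\ref{lem:LowerFS}), pulls $M_{\calD}f_2$ out pointwise, and reduces via H\"older to a \emph{linear} bi-parameter paraproduct acting on $f_1$, which is handled by the known weighted linear theory. Corollary~\ref{cor:main} is reserved for Case~2, the symmetry in which the \emph{output} slot carries the full average, so that after dualising the dual function $f_3$ pairs against $1_R/|R|$.

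Concretely, your dualised approach to \eqref{eq:OneFullPara} runs into a weight mismatch that your final paragraph alludes to but does not resolve. After the pointwise bound $S_B \le M^w_{\calD}(f_2/w)\cdot S^1_{\calD,M}f_1 \cdot S^2_{\calD,M}(gw)$ and H\"older with exponents $(p,q,r')$, you would need both
\[
\|M^w_{\calD}(f_2/w)\|_{L^q(w_2)}\lesssim\|f_2\|_{L^q(w_2)}
\quad\text{and}\quad
\|S^2_{\calD,M}(gw)\|_{L^{r'}(w)}\lesssim\|g\|_{L^{r'}(w)},
\]
and neither follows from Lemmas~\ref{lem:wm} or~\ref{lem:standardEst}: those bound each operator on a \emph{single} weighted space $L^s(v)$, whereas here you are asking for genuine two-weight inequalities (e.g.\ $S^2_{\calD,M}\colon L^{r'}(w^{1-r'})\to L^{r'}(w)$). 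The reason Case~2 avoids this trap is that there $\langle f_3 w\rangle_R=\langle w\rangle_R\langle f_3\rangle_R^w$ supplies the factor $\langle w\rangle_R$ in Corollary~\ref{cor:main} \emph{naturally}; the resulting $M^w_{\calD}f_3$ is bounded on $L^{r'}(w)$ by Lemma~\ref{lem:wm}, while $f_1,f_2$ each sit under a plain $S^i_{\calD,M}$ with the matching weight $w_i$. You should rewrite the argument treating the two cases separately, and demonstrate the new technique on the Case~2 symmetry rather than on \eqref{eq:OneFullPara}.
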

\begin{proof}
\textbf{Case 1.} Suppose that there is a full average over $R = I \times J$ at least in $f_1$ or $f_2$.
In such cases the bilinear paraproduct estimate decouples reducing to linear estimates.
For example, suppose $\Pi$
has the form \eqref{eq:OneFullPara}. Then using the weighted lower square function estimate, Lemma \ref{lem:LowerFS}, and
the basic Lemma \ref{lem:standardEst} we have
\begin{align*}
\big\|\Pi(f_1,f_2)\big\|_{L^r(w)}&\lesssim \Bigg\| \Bigg(\sum_{J} \Bigg|\sum_{I}|a_R| \Big\langle f_1, h_I \otimes \frac{1_J}{|J|}\Big\rangle \bla f_2\bra_{I\times J} \frac{1_I}{|I|} \Bigg|^2 \frac{1_J}{|J|} \Bigg)^{\frac 12}\Bigg\|_{L^r(w)}\\
&\lesssim \Bigg\|M_{\calD}f_2 \Bigg(\sum_{J} \Bigg(\sum_{I}|a_R| \Big|\Big\langle f_1, h_I \otimes \frac{1_J}{|J|}\Big\rangle \Big|  \frac{1_I}{|I|} \Bigg)^2 \frac{1_J}{|J|} \Bigg)^{\frac 12}\Bigg\|_{L^r(w)}\\
&\le \|M_{\calD}f_2\|_{L^q(w_2)}\Bigg\| \Bigg(\sum_{J} \Bigg(\sum_{I}|a_R|  \Big|\Big\langle f_1, h_I \otimes \frac{1_J}{|J|}\Big\rangle \Big|  \frac{1_I}{|I|} \Bigg)^2 \frac{1_J}{|J|} \Bigg)^{\frac 12}\Bigg\|_{L^p(w_1)}\\
&\lesssim \|f_2\|_{L^q(w_2)} \| S_{\calD^m}^2h \|_{L^p(w_1)},
\end{align*}
where
\[
h=\sum_{R = I \times J} |a_R|  \Big|\Big\langle f_1, h_I \otimes \frac{1_J}{|J|}\Big\rangle \Big|\frac{1_I}{|I|}\otimes h_J.
\]This is just a standard linear bi-parameter paraproduct, and thus satisfies the weighted estimate $\|h\|_{L^p(w_1)}\lesssim \|f_1\|_{L^p(w_1)}$ (see e.g.
\cite{HPW}).
Thus, we are done by Lemma \ref{lem:standardEst}.

\textbf{Case 2.}
Out of the remaining cases we choose the symmetry
\[
\Pi(f_1,f_2)= \sum_{R = I \times J} a_R \Big \langle f_1, h_I \otimes \frac{1_J}{|J|}\Big \rangle \Big \langle f_2,  \frac{1_I}{|I|}\otimes h_J\Big\rangle \frac{1_{R}}{|R| }.
\]
Equipped with our current tools we can prove the desired estimate directly
for any $p_0, q_0 \in (1,\infty)$ and $r_0 \in [1,\infty)$ satisfying $1/r_0 = 1/p_0 + 1/q_0$. By
bilinear extrapolation \cite{DU, GM} it is enough to prove the estimate with only one fixed tuple, so this is certainly enough to get the claimed full range.
For example, in the case $r_0 = 1$ we get
\begin{align*}
\|\Pi(f_1,f_2)\|_{L^1(w)} &\le  \sum_{R = I \times J} |a_R|\bla w \bra_R \Big| \Big \langle f_1, h_I \otimes \frac{1_J}{|J|}\Big \rangle \Big| \Big| \Big \langle f_2,  \frac{1_I}{|I|}\otimes h_J\Big\rangle \Big| \\
&\lesssim \Bigg\| \Bigg( \sum_{R = I \times J}\Big| \Big \langle f_1, h_I \otimes \frac{1_J}{|J|}\Big \rangle \Big|^2 \Big| \Big \langle f_2,  \frac{1_I}{|I|}\otimes h_J\Big\rangle \Big|^2 \frac{1_R}{|R|} \Bigg)^{1/2} \Bigg\|_{L^1(w)} \\
&\le  \| S_{\calD,M}^1 f_1\|_{L^{p_0}(w_1)} \|  S_{\calD,M}^2 f_2\|_{L^{q_0}(w_2)} \lesssim  \|  f_1\|_{L^{p_0}(w_1)} \|  f_2\|_{L^{q_0}(w_2)},
\end{align*}
where we have used Corollary \ref{cor:main} in the second estimate and Lemma \ref{lem:standardEst} in the last step.
\end{proof}
\begin{rem}
The advantage of the case $r_0 = 1$ in Case 2 above is that then $w \in A_2$, so that proving
the required estimate $\|A\|_{\BMO_{\textup{prod}, w}} \lesssim \|A\|_{\BMO_{\textup{prod}}}$ does not require
the John-Nirenberg inequality and thus not even Lemma \ref{lem:wm}.  However,
we still note that the case $r_0 > 1$ could be done with a similar calculation, but it requires bounding
$|\langle \Pi(f_1,f_2), f_3 w \rangle|$ for $f_3 \in L^{r_0'}(w)$ with
$$
 \sum_{R = I \times J} |a_R|\bla w \bra_R \Big| \Big \langle f_1, h_I \otimes \frac{1_J}{|J|}\Big \rangle \Big| \Big| \Big \langle f_2,  \frac{1_I}{|I|}\otimes h_J\Big\rangle \Big| \bla |f_3| \bra_{R}^w,
 $$
using the fuller strength of Corollary \ref{cor:main} and also Lemma \ref{lem:wm}.

\end{rem}

\section{Proof of Theorem \ref{thm:main3}}\label{sec:MulBloom}
We now move on to Theorem \ref{thm:main3} -- this directly forces us to deal with $m$-parameters, $m \ge 3$. Indeed, recall that 
\cite[Theorem 1.5]{EA} has no restrictions as long as the appearing singular integral operators are at most bi-parameter. 
Notice that it is obvious how $m$-parameter weights in the product space $\R^{d_1} \times \cdots \times \R^{d_m}$
are defined -- simply work with rectangles $R = I_1 \times \cdots \times I_m$. Notice also that if we set
$\calD = \prod_{i=1}^m \calD^i$, where $\calD^i$ is a dyadic grid on $\R^{d_i}$, and if we have a sequence
$A = (a_R)_{R \in \calD}$, the various product $\BMO$ norms of $A$ have a completely analogous definition in $m$-parameters.

We begin by noticing that a multi-parameter version of Theorem \ref{thm:main1} is true -- the proof is exactly the same.
\begin{thm}\label{thm:main1MUL}
For all $m$-parameter weights $w \in A_{\infty}$ on $\R^{d_1} \times \cdots \times \R^{d_m}$ we have
$$\|A\|_{\BMO_{\textup{prod}}} \sim \|A\|_{\BMO_{\textup{prod}, w}}.
$$
\end{thm}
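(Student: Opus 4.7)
The plan is to reproduce the proof of Theorem~\ref{thm:main1} verbatim in the $m$-parameter setting, since nothing in that argument used bi-parameter specifically beyond having the right Lebesgue-space, maximal-function, and paraproduct tools available. The proof has a two-directional structure, and my job reduces to verifying that each bi-parameter ingredient has a genuine $m$-parameter analogue.

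First I would establish the multi-parameter version of Proposition~\ref{prop:JN}, that $\|A\|_{\BMO_{\textup{prod}, w}} \sim \|A\|_{\BMO_{\textup{prod}, w}(p)}$ for all $p \in (0,\infty)$ and $m$-parameter weights $w \in A_\infty$. The bi-parameter proof used only: Chebyshev on the level set $E = \{S_{A,\Omega} > N\|A\|_{\BMO_{\textup{prod}, w}(p)}\}$; the good/bad split $\mathcal R_1 \cup \mathcal R_2$ of $\calD$ based on whether $w(E\cap R) > w(R)/2$; the containment $R \subset \{M_{\calD}^w 1_E > 1/2\}$ for $R \in \mathcal R_1$; and $L^2(w)$-boundedness of $M_{\calD}^w$ (Lemma~\ref{lem:wm}). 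All of these make literal sense in $m$-parameters, provided we have an $m$-parameter analogue of Lemma~\ref{lem:wm}, which is known (Fefferman's argument extends by induction on the number of parameters, and the non-tensor structure of $w$ is handled just as in the bi-parameter case).

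For the direction $\|A\|_{\BMO_{\textup{prod}, w}} \lesssim \|A\|_{\BMO_{\textup{prod}}}$, pick $s$ with $w \in A_s$, consider the linear $m$-parameter paraproduct $\Pi_A f = \sum_R a_R \langle f \rangle_R h_R$, and invoke its $L^s(w) \to L^s(w)$ boundedness with norm $\lesssim \|A\|_{\BMO_{\textup{prod}}}$. This is the Chang--Fefferman $H^1$--$\BMO$ duality plus iterated weighted square function estimates; in the bi-parameter case it is cited from \cite{HPW}, and in general multi-parameter it follows from the same mechanism (or from the representation and weighted estimates in \cite{Ou}). The multi-parameter square function characterization then gives
\begin{equation*}
\Bigl\| \Bigl( \sum_{R \in \calD} |a_R|^2 \langle f \rangle_R^2 \tfrac{1_R}{|R|} \Bigr)^{1/2} \Bigr\|_{L^s(w)} \lesssim \|A\|_{\BMO_{\textup{prod}}} \|f\|_{L^s(w)},
\end{equation*}
and testing $f = 1_\Omega$ yields $\|A\|_{\BMO_{\textup{prod}, w}(s)} \lesssim \|A\|_{\BMO_{\textup{prod}}}$, which the $m$-parameter Proposition~\ref{prop:JN} upgrades to $p = 2$.

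For the reverse direction, I would copy the duality computation from the bi-parameter proof: for $0 \le f \in L^s(w)$ and $0 \le g \in L^{(s/2)'}(w)$, write
\begin{equation*}
\sum_{R \in \calD} |a_R|^2 \langle w \rangle_R \langle f \rangle_R^2 \langle g \rangle_R^w = \int_0^\infty \sum_{\substack{R \in \calD \\ \langle f \rangle_R^2 \langle g \rangle_R^w > t}} |a_R|^2 \langle w \rangle_R\, \ud t,
\end{equation*}
bound the inner sum by $\|A\|_{\BMO_{\textup{prod}, w}}^2\, w(\{(M_{\calD}f)^2 M_{\calD}^w g > t\})$ using that each such $R$ sits inside that level set, reassemble the integral, apply Hölder, and invoke the $m$-parameter version of Lemma~\ref{lem:wm} for both $M_{\calD}$ and $M_{\calD}^w$. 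Testing with $f = w^{-1/s} 1_\Omega$, $g = w^{-1/(s/2)'} 1_\Omega$ together with the trivial Hölder bound $1 \le \langle w^{-1/s} \rangle_R^2 \langle w^{2/s} \rangle_R$ produces the bound $\|A\|_{\BMO_{\textup{prod}}} \lesssim \|A\|_{\BMO_{\textup{prod}, w}}$. The only real obstacle is the multi-parameter weighted maximal theorem; once it is in hand, every other step of the bi-parameter argument transfers symbol-for-symbol.
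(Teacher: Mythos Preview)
Your proposal is correct and matches the paper's approach exactly: the paper simply states that ``the proof is exactly the same'' as that of Theorem~\ref{thm:main1} and gives no separate argument. Your careful enumeration of the needed $m$-parameter ingredients (the multi-parameter Lemma~\ref{lem:wm}, the weighted paraproduct bound, and the John--Nirenberg Proposition~\ref{prop:JN}) is precisely the check one has to make to justify that assertion.
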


To communicate the main idea regarding Theorem \ref{thm:main3},
including the definition of the space $\bmo^{\calI}(\nu)$ appearing in the theorem, we now restrict the discussion to the tri-parameter space
$\R^d = \R^{d_1} \times \R^{d_2} \times \R^{d_3}$. Let $\calD^i$ be a dyadic grid
in $\R^{d_i}$ and set $\calD = \calD^1 \times \calD^2 \times \calD^3$. We denote cubes in $\calD^i$ by $I_i, J_i, K_i$, etc.
It is obvious how to define, just like in the bi-parameter setting, the square functions $S_{\calD^1}^1$, $S_{\calD^1 \times \calD^2}^{1,2}$, $S_{\calD}$, and so on.
We now consider $b \colon \R^d \to \C$
and a tri-parameter Bloom weight $\nu = \mu^{1/p} \lambda^{-1/p}$, where $p \in (1, \infty)$ and $\mu, \lambda \in A_p(\R^d)$ are
tri-parameter $A_p$ weights.  In Section \ref{sec:BMO} we had a sequence $A = (a_R)$, but if we would have been talking about a function $b$ there, we would have simply considered the sequence $\big(\bla b, h_R \bra\big)$. For the commuting function $b$ we prefer to understand various $\BMO$ spaces
directly via the dualised forms \eqref{eq:standardH1BMO}. This is because it makes various relationships between the $\BMO$ spaces much more transparent.
 Here we follow \cite[Section 2]{EA}, where the facts
used in the following explanation are also proved.

Theorem \ref{thm:main3} is not dyadic -- thus, we need the following inequalities to hold uniformly for all dyadic grids $\calD^i$.
If for $i \in \{1,2,3\}$ we have
$$
|\langle b, f \rangle| \le C\|S_{\calD^i}^i f\|_{L^1(\nu)},
$$
we denote the optimal constant $C$ by $\|b\|_{\BMO^i(\nu)}$. Moreover, e.g. the norm $\|b\|_{\BMO^{1,2}_{\textup{prod}}(\nu)} = \|b\|_{\BMO^{1,2}(\nu)}$ has the obvious dual definition
as the best constant in the inequality
$$
|\langle b, f \rangle| \le \|b\|_{\BMO^{1,2}_{\textup{prod}}(\nu)} \|S_{\calD^1 \times \calD^2}^{1,2} f\|_{L^1(\nu)},
$$
or could be defined as in Section \ref{sec:BMO}. Similarly, it is obvious how to e.g. define $\|b\|_{\BMO^{1,3}_{\textup{prod}}(\nu)} = \|b\|_{\BMO^{1,3}(\nu)}$
and $\|b\|_{\BMO^{1,2, 3}_{\textup{prod}}(\nu)} = \|b\|_{\BMO^{1,2, 3}(\nu)}$. 

We can always estimate up by adding more parameters to square functions so that
e.g.
$$
\|S_{\calD^i}^i f\|_{L^1(\nu)} \lesssim \|S_{\calD^i \times \calD^j}^{i,j} f\|_{L^1(\nu)}.
$$
This explains the convenience of using the dual formulations here directly, as from such estimates
we can immediately see that e.g. $\BMO^1(\nu) \subset \BMO^{1,2}_{\textup{prod}}(\nu)$. Another thing is that, for instance, the $\BMO^1(\nu)$ condition implies that
\begin{equation}\label{eq:uniformBMO}
|\langle b(\cdot, x_2, x_3), g \rangle| \le \|b\|_{\BMO^1(\nu)} \|S_{\calD^1} g\|_{L^1(\nu(\cdot, x_2, x_3))}
\end{equation}
uniformly on (almost every) $x_2, x_3$, which is sometimes useful. 

Given a partition $\calI = \{\calI_1, \ldots, \calI_k\}$, $k \le 3$, of the parameter space $\{1, 2, 3\}$, we define
$$
\|b\|_{\bmo^{\calI}(\nu)} = \sup_{\bar u} \|b\|_{\BMO^{\bar u}(\nu)},
$$
where $\bar u = (u_i)_{i=1}^k$ is such that $u_i \in \calI_i$. We illustrate this with examples. If $k = 3$ we
look at $[T_1, [T_2, [T_3, b]]]$, where each $T_i$ is a one-parameter CZO on $\R^{d_i}$, and the related
condition is $\|b\|_{\bmo^{\{ \{1\}, \{2\},\{3\}\}}(\nu)} = \|b\|_{\BMO^{1,2, 3}(\nu)} = \|b\|_{\BMO^{1,2, 3}_{\textup{prod}}(\nu)}$.
If $k=2$ we may e.g have $\calI = \{ \{1,2\}, \{3\}\}$ and
look at $[T_1, [T_2, b]]$, where $T_1$ is a bi-parameter CZO on $\R^{d_1} \times \R^{d_2}$ and $T_2$ is a one-parameter CZO on
$\R^{d_3}$, and the related condition is $\|b\|_{\bmo^{\{\{1,2\},\{3\}\}}(\nu)} = \sup \{ \|b\|_{\BMO^{1, 3}(\nu)},  \|b\|_{\BMO^{2, 3}(\nu)}\}$.
Both the cases $k=3$ and $k=2$ are already covered by \cite[Theorem 1.5]{EA}, since all the CZOs are at most bi-parameter. The remaining case $k=1$
deals with the commutator $[b, T]$, where $T$ is a tri-parameter CZO on $\R^d$, and the related condition is  $\|b\|_{\bmo^{\{1,2,3\}}(\nu)} =
\sup_i \|b\|_{\BMO^i(\nu)}$.

The top-level strategy is always to first use the dyadic representation theorems \cite{Hy1, Hy2, Ma1, Ou} and reduce to commutators of dyadic model operators
in some fixed dyadic grids $\calD^i$. The difficulty levels of the different cases $k \in \{1,2,3\}$ are not strictly comparable. If $k=3$ we have the triple
commutator $[T_1, [T_2, [T_3, b]]]$, where the commutator structure -- and the related decomposition of the commutator -- is most complicated, but the operators themselves are one-parameter and the model operator decomposition \cite{Hy2} of each of them is the simplest. The remaining difficulty in \cite[Theorem 1.5]{EA} was \textbf{not} that
it was not known how to efficiently decompose arbitrary commutators $[T_1, [T_2, \ldots [b, T_k]]]$ or to use the complicated $\BMO$ spaces $\bmo^{\calI}(\nu)$.
The problem was that if one $T_i$ is of $k_i$-parameters, $k_i \ge 3$, then
the model operator decomposition \cite{Ou} produces a very complicated model operator --  a multi-parameter partial paraproduct with a $k$-parameter, $k \ge 2$, paraproduct component.

This is the problem we address in Theorem \ref{thm:main3}. In the proof we deal with the $k=1$ case in the tri-parameter situation. Moreover, we do not go through
all the cases  $[b,U]$, where $U$ is a tri-parameter model operator \cite{Ou}. Rather, we only focus on the new problem that $U = P$ for a tri-parameter partial paraproduct with a bi-parameter paraproduct component. It is then possible to take a completely general commutator $[T_1, [T_2, \ldots [b, T_k]]]$, follow the decomposition strategy of \cite{EA}
and handle the main new difficulty similarly as we do here. See also Section \ref{sec:indepth} to understand the role of Lemma \ref{lem:mainLem} and, thus, how everything depends
on Theorem \ref{thm:main1}. One more point: the main Lemma \ref{lem:mainLem} is applied to the \textbf{unweighted} product
$\BMO$ coefficients appearing in the dyadic model operators, and it is not related to the commuting function $b$. If we would e.g. deal with a $4$-parameter partial paraproduct with a tri-parameter paraproduct component, we would need
the full generality of Theorem \ref{thm:main1MUL} stated above. 

Due to the above discussion, we will now only
control $[b,P]$, where $P$ is a tri-parameter partial paraproduct with a bi-parameter paraproduct component.
We will now fix a 'little $\BMO$' function $b \in \bigcap_{i=1}^3 \BMO^i(\nu)$ with
the normalisation
$$
\sup_i \|b\|_{\BMO^i(\nu)} \le 1.
$$
Further, we will fix a tri-parameter partial paraproduct
$$
Pf = \sum_{ \substack{K_1 \\ I_1^{(i_1)} = J_1^{(j_1)} = K_1}} \sum_{K_2, K_3} a_{(K_j), I_1, J_1} \Big \langle
f, h_{I_1} \otimes h_{K_2} \otimes \frac{1_{K_3}}{|K_3|} \Big\rangle h_{J_1} \otimes \frac{1_{K_2}}{|K_2|} \otimes h_{K_3},
$$
where for all $K_1, I_1, J_1$ like above we have
$$
\big\|(a_{(K_j), I_1, J_1})_{K_2, K_3}\big\|_{\BMO_{\textup{prod}}^{2,3}} \le \frac{|I_1|^{1/2} |J_1|^{1/2}}{|K_1|}.
$$
We will show the tri-parameter Bloom estimate
\begin{equation}\label{eq:triBloom}
\| [b,P]f \|_{L^p(\lambda)} \lesssim \|f\|_{L^p(\mu)},
\end{equation}
where the implicit constant depends on the norms $[\mu]_{A_p}$ and $[\lambda]_{A_p}$, and we have polynomial dependency on the complexity, but this is not emphasised.

To show this, we will need some particular paraproducts. For $i \in \{1,2, 3\}$ we define
$$
A^i_1(b,f) = \sum_{I_i \in \calD^{i}} \Delta_{I_i}^i b \Delta_{I_i}^i f, \, \,
A^i_2(b,f) = \sum_{I_i \in \calD^{i}} \Delta_{I_i}^i b E_{I_i}^i f \,\, \textup{ and } \,\, A^i_3(b, f) = \sum_{I_i \in \calD^{i}} E_{I_i}^i b \Delta_{I_i}^i f.
$$
In one-parameter $\Delta_I g = \langle g, h_I \rangle h_I$ and $E_I g = 1_I \langle g \rangle_I$ are the usual martingale difference
and averaging operators, and then e.g. $\Delta_{I_1}^1 f(x) = (\Delta_{I_1} f(\cdot, x_2, x_3))(x_1)$.
For $i_1, i_2 \in \{1,2,3\}$ and $j_1, j_2 \in \{1, 2, 3\}$ define formally
$$
A_{j_1, j_2}^{i_1, i_2}(b,f) = A^{i_1}_{j_1}A^{i_2}_{j_2}(b,f)
$$ 
so that e.g.
$$
A_{1, 2}^{1,3}(b,f) = \sum_{I_3 \in \calD^{3}} A^1_1(\Delta_{I_3}^3 b, E_{I_3}^3 f) = \sum_{\substack{ I_1 \in \calD^1 \\ I_3 \in \calD^3}}\Delta_{I_1}^1 \Delta_{I_3}^3 b \Delta_{I_1}^1 E_{I_3}^3 f.
$$
What we need now is that according to \cite[Lemma 3.1]{EA} we have
$$
\|A_{j_1, j_2}^{i_1, i_2}(b,f)\|_{L^p(\lambda)} \lesssim \|f\|_{L^p(\mu)}
$$
as long as $(j_1, j_2) \ne (3,3)$. Indeed, for $j_1 \ne 3$ and $j_2 \ne 3$ we have
$$
\|A_{j_1, j_2}^{i_1, i_2}(b,f)\|_{L^p(\lambda)} \lesssim \|b\|_{\BMO^{i_1,i_2}_{\textup{prod}}(\nu)} \|f\|_{L^p(\mu)} \lesssim \|f\|_{L^p(\mu)},
$$
and if e.g. $j_1 = 3$ and $j_2 \ne 3$ we have
$$
\|A_{3, j_2}^{i_1, i_2}(b,f)\|_{L^p(\lambda)} \lesssim \|b\|_{\BMO^{i_2}(\nu)} \|f\|_{L^p(\mu)} \le \|f\|_{L^p(\mu)}.
$$

We now decompose the commutator $[b,P]$ using the paraproducts.
In $[b,P]f = bPf - P(bf)$ we decompose
$$
bPf = \sum_{j_1, j_2 = 1}^3 A_{j_1, j_2}^{1,3}(b, Pf) \qquad \textup{and} \qquad bf =  \sum_{j_1, j_2 = 1}^3 A_{j_1, j_2}^{1,2}(b, f).
$$
Because of the above paraproduct estimates and the weighted boundedness of $P$ (see e.g. \cite[Proposition 7.6]{HPW}),
to control $\| [b,P]f \|_{L^p(\lambda)}$ we only need to control
the $L^p(\lambda)$ norm of 
\begin{align*}
& A_{3,3}^{1,3}(b, Pf) - P(A_{3,3}^{1,2}(b,f)) \\
 &= \sum_{ \substack{K_1 \\ I_1^{(i_1)} = J_1^{(j_1)} = K_1}} \sum_{K_2, K_3} a_{(K_j), I_1, J_1} \Big \langle
f, h_{I_1} \otimes  h_{K_2} \otimes \frac{1_{K_3}}{|K_3|} \Big\rangle h_{J_1} \otimes \bla b \bra_{J_1 \times K_3}^{1,3}\frac{1_{K_2}}{|K_2|} \otimes h_{K_3} \\
&- \sum_{ \substack{K_1 \\ I_1^{(i_1)} = J_1^{(j_1)} = K_1}} \sum_{K_2, K_3} a_{(K_j), I_1, J_1} \Big \langle \bla b \bra_{I_1 \times K_2}^{1,2} \bla
f, h_{I_1} \otimes  h_{K_2} \bra_{1,2} \Big\rangle_{K_3}  h_{J_1} \otimes \frac{1_{K_2}}{|K_2|} \otimes h_{K_3}.
\end{align*}

In the first term we write $\bla b \bra_{J_1 \times K_3}^{1,3} = \big[\bla b \bra_{J_1 \times K_3}^{1,3} - \bla b \bra_{J_1 \times K_2 \times K_3}\big] + \bla b \bra_{J_1 \times K_2 \times K_3}$ and in the second term we write
$ \bla b \bra_{I_1 \times K_2}^{1,2} = \big[ \bla b \bra_{I_1 \times K_2}^{1,2} - \bla b \bra_{I_1 \times K_2 \times K_3}\big] + \bla b \bra_{I_1 \times K_2 \times K_3}$. We then combine the last two terms and further add and subtract $\bla b \bra_{K_1 \times K_2 \times K_3}$.
We begin by
dealing with one of the resulting terms
$$
E_1 := \sum_{ \substack{K_1 \\ I_1^{(i_1)} = J_1^{(j_1)} = K_1}} \sum_{K_2, K_3} \gamma_{(K_j), I_1, J_1}  \Big \langle
f, h_{I_1} \otimes  h_{K_2} \otimes \frac{1_{K_3}}{|K_3|} \Big\rangle h_{J_1} \otimes \frac{1_{K_2}}{|K_2|} \otimes h_{K_3},
$$
where $\gamma_{(K_j), I_1, J_1} =  a_{(K_j), I_1, J_1} \big[ \bla b \bra_{I_1 \times K_2 \times K_3} - \bla b \bra_{K_1 \times K_2 \times K_3}\big]$.
We write
$$
\bla b \bra_{I_1 \times K_2 \times K_3} - \bla b \bra_{K_1 \times K_2 \times K_3} = \sum_{l=1}^{i_1} \iint_{\R^{d_2} \times \R^{d_3}} \bla b, h_{I_1^{(l)}} \bra_1 \bla
h_{I_1^{(l)}} \bra_{I_1} \frac{1_{K_2}}{|K_2|}   \frac{1_{K_3}}{|K_3|}.
$$
Allowing a polynomial dependency on the complexity, we can fix $l$ and study the dualised form
\begin{align*}
\iint_{\R^{d_2} \times \R^{d_3}} \sum_{ \substack{K_1 \\ L_1^{(i_1-l)}  = K_1}} \sum_{ \substack{I_1^{(l)}  = L_1 \\ J_1^{(j_1)}  = K_1}} \sum_{K_2, K_3} 
|a_{(K_j), I_1, J_1}| |L_1|^{-1/2}& \big|\bla b, h_{L_1} \bra_1\big| 
\Big| \Big \langle f, h_{I_1} \otimes  h_{K_2} \otimes \frac{1_{K_3}}{|K_3|} \Big\rangle \Big| \\
& \Big| \Big \langle g, h_{J_1} \otimes  \frac{1_{K_2}}{|K_2|} \otimes h_{K_3} \Big\rangle\Big|
 \frac{1_{K_2}}{|K_2|}   \frac{1_{K_3}}{|K_3|}.
\end{align*}
Using \eqref{eq:uniformBMO} we reduce to
\begin{equation}\label{eq:eq1}
\begin{split}
\int_{\R^d} \Bigg(  \sum_{ \substack{K_1 \\ L_1^{(i_1-l)}  = K_1}} \frac{1_{L_1}}{|L_1|^2}
\otimes \Bigg[  \sum_{ \substack{I_1^{(l)}  = L_1 \\ J_1^{(j_1)}  = K_1}} \sum_{K_2, K_3} &
|a_{(K_j), I_1, J_1}| \Big| \Big \langle f, h_{I_1} \otimes  h_{K_2} \otimes \frac{1_{K_3}}{|K_3|} \Big\rangle \Big| \\
&\Big| \Big \langle g, h_{J_1} \otimes  \frac{1_{K_2}}{|K_2|} \otimes h_{K_3} \Big\rangle\Big|
 \frac{1_{K_2}}{|K_2|}   \frac{1_{K_3}}{|K_3|}
\Bigg]^2
\Bigg)^{1/2} \nu.
\end{split}
\end{equation}
A certain vector-valued inequality will now be derived using $A_{\infty}$ extrapolation, and the base case estimate for the extrapolation
will rely on Corollary \ref{cor:main}. 

\begin{lem}\label{lem:mainLem}
Suppose $0 < p, q < \infty$ and $w \in A_{\infty}(\R^{d_2} \times \R^{d_3})$ is a bi-parameter $A_{\infty}$ weight.
Suppose for all $k, j \in \N$ we are given a sequence $(a_{K_2, K_3}^{k,j})_{K_2, K_3}$ satisfying
$$
\big\|(a_{K_2, K_3}^{k,j})_{K_2, K_3}\big\|_{\BMO_{\textup{prod}}^{2,3}} \le 1.
$$
Then for all locally integrable $f_{j,k}, g_{j,k} \colon \R^{d_2} \times \R^{d_3} \to \C$ we have
\begin{align*}
& \iint \Bigg( \sum_j \Bigg[ \sum_k \sum_{K_2, K_3} |a_{K_2, K_3}^{k,j}| 
\Big| \Big\langle f_{j,k}, h_{K_2} \otimes \frac{1_{K_3}}{|K_3|} \Big\rangle \Big|
\Big| \Big\langle g_{j,k}, \frac{1_{K_2}}{|K_2|} \otimes h_{K_3} \Big\rangle \Big|
\frac{1_{K_2}}{|K_2|}   \frac{1_{K_3}}{|K_3|}
\Bigg]^p \Bigg)^{q/p} w \\
&\lesssim 
\iint \Bigg( \sum_j \Bigg[ \sum_k \Bigg( \sum_{K_2, K_3}
\bla \big|\bla f_{j,k}, h_{K_2}\bra_2 \big| \bra_{K_3}^2
\bla \big|\bla g_{j,k}, h_{K_3}\bra_3 \big| \bra_{K_2}^2
\frac{1_{K_2}}{|K_2|}   \frac{1_{K_3}}{|K_3|} \Bigg)^{1/2}
\Bigg]^p \Bigg)^{q/p} w.
\end{align*}
\end{lem}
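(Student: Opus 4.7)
The plan is to derive the lemma by $A_\infty$ extrapolation from an $L^1(w)$ base case that is delivered by Corollary \ref{cor:main}.

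For brevity write $T_j$ and $U_j$ for the bracketed expressions on the left- and right-hand sides, respectively. The base case I would establish is
\[
\int T_j \, w \lesssim \int U_j \, w
\]
for each $j$ and every bi-parameter weight $w \in A_\infty(\R^{d_2} \times \R^{d_3})$. To prove it, I would integrate $1_{K_2}/|K_2| \cdot 1_{K_3}/|K_3|$ against $w$, producing $\langle w \rangle_{K_2 \times K_3}$, and for each fixed $k$ apply Corollary \ref{cor:main} with $A = (a_{K_2,K_3}^{k,j})_{K_2, K_3}$ (whose $\BMO_{\textup{prod}}^{2,3}$ norm is at most $1$ by hypothesis) and $b_{K_2,K_3}$ equal to the product $|\langle f_{j,k}, h_{K_2} \otimes 1_{K_3}/|K_3|\rangle| |\langle g_{j,k}, 1_{K_2}/|K_2| \otimes h_{K_3}\rangle|$. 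The corollary bounds the resulting expression by $\|S_B\|_{L^1(w)}$, and applying the trivial estimate $|\langle \phi \rangle_{K_3}| \le \langle |\phi| \rangle_{K_3}$ on each partial-inner-product factor identifies $S_B$ with the $k$-th summand of $U_j$; summing in $k$ completes the base case.

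The full conclusion is the mixed-norm $L^q(w; \ell^p)$ inequality for arbitrary $0 < p, q < \infty$ and every bi-parameter $A_\infty$ weight $w$. Since the base case yields $\|T_j\|_{L^1(w)} \lesssim \|U_j\|_{L^1(w)}$ uniformly in $j$ and for every such $w$, I would then invoke a vector-valued $A_\infty$ extrapolation theorem of Cruz-Uribe--Martell--P\'erez type, adapted to the bi-parameter setting. The Rubio de Francia iteration that powers such extrapolation relies on the weighted bi-parameter maximal function being bounded on $L^p(w)$ for $w \in A_\infty$ -- which is exactly Lemma \ref{lem:wm}. This mechanism upgrades the scalar $L^1$ estimate into the desired $L^q(w;\ell^p)$ bound.

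The main obstacle is concentrated in the base case, and specifically in the applicability of Corollary \ref{cor:main}. That corollary hinges on the equivalence $\|A\|_{\BMO_{\textup{prod}}} \sim \|A\|_{\BMO_{\textup{prod}, w}}$ from Theorem \ref{thm:main1}, the very result that Section \ref{sec:indepth} highlights as the clean substitute for the sparse-domination argument of \cite[Lemma 6.7]{LMV}, which is unavailable in product space. Once this is in place the extrapolation step is a routine bookkeeping argument, although one must use the $A_\infty$ (rather than $A_p$) flavour of extrapolation in order to reach the off-diagonal range $p, q \le 1$ claimed in the lemma.
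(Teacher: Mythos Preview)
Your proposal is correct and matches the paper's proof essentially line for line: reduce to an $L^1(w)$ base case via $A_\infty$ extrapolation for Muckenhoupt bases (Cruz-Uribe--Martell--P\'erez), and settle the base case with Corollary~\ref{cor:main}. The only cosmetic difference is that the paper strips both the $j$-sum and the $k$-sum by \emph{repeated} extrapolation down to a single pair $(f,g)$, whereas you keep $\sum_k$ in the base case and collapse it by linearity of the integral after applying Corollary~\ref{cor:main} termwise; both routes are valid. One small inaccuracy: the Rubio de Francia iteration behind $A_\infty$ extrapolation uses the boundedness of the ordinary bi-parameter strong maximal function on $L^s(w)$ for $w\in A_s$, not the weighted maximal operator $M^w_{\calD}$ of Lemma~\ref{lem:wm}, so that lemma is not actually what drives the extrapolation step here.
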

\begin{proof}
By repeated use of $A_{\infty}$ extrapolation \cite[Theorem 2.1]{CUMP}, it is enough to prove that
\begin{align*}
& \iint \Bigg( \sum_{K_2, K_3} |a_{K_2, K_3}| 
\Big| \Big\langle f, h_{K_2} \otimes \frac{1_{K_3}}{|K_3|} \Big\rangle \Big|
\Big| \Big\langle g, \frac{1_{K_2}}{|K_2|} \otimes h_{K_3} \Big\rangle \Big|
\frac{1_{K_2}}{|K_2|}   \frac{1_{K_3}}{|K_3|} \Bigg) w \\
&\lesssim \iint \Bigg( \sum_{K_2, K_3}
\bla \big|\bla f, h_{K_2}\bra_2 \big| \bra_{K_3}^2
\bla \big|\bla g, h_{K_3}\bra_3 \big| \bra_{K_2}^2
\frac{1_{K_2}}{|K_2|}   \frac{1_{K_3}}{|K_3|}\Bigg)^{1/2} w.
\end{align*}
We note that the used extrapolation results are stated for the so-called Muckenhoupt bases and directly cover multi-parameter situations.
The remaining estimate now follows from Corollary \ref{cor:main} almost immediately.
\end{proof}
Applying the $p=2$ and $q=1$ case of the previous lemma with a fixed $x_1$ and $\nu(x_1, \cdot) \in A_{\infty}(\R^{d_2} \times \R^{d_3})$ to \eqref{eq:eq1} we reduce to
\begin{align*}
\int_{\R^d} \Bigg( &\sum_{ \substack{K_1 \\ L_1^{(i_1-l)}  = K_1}} \frac{1_{L_1}}{|L_1|^2 |K_1|^2} \otimes \Bigg[ \sum_{ \substack{I_1^{(l)}  = L_1 \\ J_1^{(j_1)}  = K_1}}
|I_1|^{1/2} |J_1|^{1/2}\\
&\Bigg( \sum_{K_2, K_3}
\bla \big|\bla f, h_{I_1} \otimes h_{K_2}\bra_{1,2} \big| \bra_{K_3}^2
\bla \big|\bla g, h_{J_1} \otimes h_{K_3}\bra_{1,3} \big| \bra_{K_2}^2
\frac{1_{K_2}}{|K_2|}   \frac{1_{K_3}}{|K_3|}\Bigg)^{1/2}
\Bigg]^2 \Bigg)^{1/2} \nu.
\end{align*}
We now estimate
\begin{align*}
&\Bigg( \sum_{K_2, K_3}
\bla \big|\bla f, h_{I_1} \otimes h_{K_2}\bra_{1,2} \big| \bra_{K_3}^2 
\bla \big|\bla g, h_{J_1} \otimes h_{K_3}\bra_{1,3} \big| \bra_{K_2}^2
\frac{1_{K_2}}{|K_2|}   \frac{1_{K_3}}{|K_3|}\Bigg)^{1/2} \\
&\le |I_1|^{1/2} |J_1|^{1/2}
\Bigg( \sum_{K_2, K_3}
\bla \big| \bla f, h_{K_2} \bra_2 \big| \bra_{I_1 \times K_3}^2  
\bla \big| \bla \Delta_{K_1,j_1}^1 g, h_{K_3} \bra_3 \big| \bra_{J_1 \times K_2}^2 \frac{1_{K_2}}{|K_2|}   \frac{1_{K_3}}{|K_3|}\Bigg)^{1/2}.
\end{align*}
Here $\Delta_{K_1,j_1}^1 g = \sum_{J_1^{(j_1)} = K_1} \Delta_{J_1}^1 g$ is a martingale block, and
we essentially threw away the cancellation in the $h_{I_1}$, which we can do as the nature of our argument produces one extra cancellation.
Using this we reduce to estimating
\begin{align*}
\int_{\R^d} M_{\calD^1}^1 \Bigg( \sum_{K_2} \frac{1_{K_2}}{|K_2|}& \otimes \big[M_{\calD^1 \times \calD^3}  \bla f, h_{K_2} \bra_2\big]^2\Bigg)^{1/2} \\
&\Bigg( \sum_{K_1} \Bigg[ M_{\calD^1}^1 \Bigg( \sum_{K_3} \big[M_{\calD^1 \times \calD^2} \bla \Delta_{K_1,j_1}^1 g, h_{K_3} \bra_3\big]^2 \otimes \frac{1_{K_3}}{|K_3|}  \Bigg)^{1/2}
\Bigg]^2 \Bigg)^{1/2} \nu.
\end{align*}
Using H\"older's inequality and the
standard estimates of Lemma \ref{lem:standardEst}, together with some vector-valued improvements of Lemma \ref{lem:standardEst}, which can be obtained by extrapolating the estimates of the same lemma, we derive the desired upper bound $\|f\|_{L^p(\mu)} \|g\|_{L^{p'}(\lambda^{1-p'})}$.
We have shown that
$$
|E_1| \lesssim \|f\|_{L^p(\mu)}.
$$
The error term with $\bla b \bra_{J_1 \times K_2 \times K_3} - \bla b \bra_{K_1 \times K_2 \times K_3}$ is handled similarly.

To complete the proof, we now deal with the error term
\begin{align*}
E_2 = \sum_{ \substack{K_1 \\ I_1^{(i_1)} = J_1^{(j_1)} = K_1}} \sum_{K_2, K_3} a_{(K_j), I_1, J_1} \Big \langle
f, h_{I_1}& \otimes  h_{K_2} \otimes \frac{1_{K_3}}{|K_3|} \Big\rangle \\
&  h_{J_1} \otimes \big[\bla b \bra_{J_1 \times K_3}^{1,3} - \bla b \bra_{J_1 \times K_2 \times K_3}\big]\frac{1_{K_2}}{|K_2|} \otimes h_{K_3},
\end{align*}
which is in some sense simpler than the error term $E_1$ and does not require the new techniques of this paper.
This is essentially because in the following expansion of the function $b$ we do not get a sum over the cubes in the shift parameter as above, but
rather over $J_2 \in \calD^2$. So we expand
$$
\big(\bla b \bra_{J_1 \times K_3}^{1,3} - \bla b \bra_{J_1 \times K_2 \times K_3}\big) 1_{K_2} = \sum_{J_2 \subset K_2} \Big \langle b, \frac{1_{J_1}}{|J_1|} \otimes
h_{J_2} \otimes \frac{1_{K_3}}{|K_3|} \Big\rangle h_{J_2}
$$
and dualise, and use this to reduce to
\begin{align*}
\iint_{\R^{d_1} \times \R^{d_3}} &\sum_{ \substack{K_1 \\ I_1^{(i_1)} = J_1^{(j_1)} = K_1}} \sum_{K_2, K_3} \sum_{J_2 \subset K_2}
|a_{(K_j), I_1, J_1}|  \frac{\big| \bla b, h_{J_2} \bra_2\big|}{|K_2|} \\
& \Big| \Big \langle f, h_{I_1} \otimes  h_{K_2} \otimes \frac{1_{K_3}}{|K_3|} \Big\rangle \Big|
 \big| \big \langle g, h_{J_1} \otimes  h_{J_2} \otimes h_{K_3} \big\rangle \big|
  \frac{1_{J_1}}{|J_1|} \frac{1_{K_3}}{|K_3|}.
\end{align*}
Using an estimate like \eqref{eq:uniformBMO} we get
\begin{equation}\label{eq:eq2}
\begin{split}
 \sum_{ \substack{K_1 \\ I_1^{(i_1)} = J_1^{(j_1)} = K_1}} \sum_{K_2, K_3} |a_{(K_j), I_1, J_1}|
 \Big| \Big \langle f, h_{I_1} \otimes & h_{K_2} \otimes \frac{1_{K_3}}{|K_3|} \Big\rangle \Big| \\
 &\bla \big( S_{\calD^2} \bla g, h_{J_1} \otimes h_{K_3} \bra_{1,3} \big) \bla \nu \bra_{J_1 \times K_3}^{1,3} \bra_{K_2}.
 \end{split}
\end{equation}
We now define the auxiliary operator
$$
Ug=\sum_{V_1, V_3} h_{V_1} 
\otimes 
\big(S_{\calD^2} \bla g, h_{V_1}\otimes h_{V_3}\bra_{1,3}\big)\bla \nu \bra_{V_1 \times V_3}^{1,3}  
\otimes
 h_{V_3},
$$
and notice that the term in \eqref{eq:eq2} equals
$$
\sum_{ \substack{K_1 \\ I_1^{(i_1)} = J_1^{(j_1)} = K_1}} \sum_{K_2, K_3} |a_{(K_j), I_1, J_1}|
\Big| \Big \langle f, h_{I_1} \otimes  h_{K_2} \otimes \frac{1_{K_3}}{|K_3|} \Big\rangle \Big|
\Big| \Big \langle Ug, h_{J_1} \otimes \frac{1_{K_2}}{|K_2|} \otimes h_{K_3} \Big\rangle \Big|.
$$
That is, using the original partial paraproduct $P$, we can see this as the pairing $\langle Pf, Ug \rangle$ with absolute values in (which is of no significance).
Thus, using the known weighted boundedness of $P$, we can simply dominate things by $\|f\|_{L^p(\mu)} \|Ug\|_{L^{p'}(\mu^{1-p'})}$.

It remains to show that $\|Ug\|_{L^{p'}(\mu^{1-p'})} \lesssim \|g\|_{L^{p'}(\lambda^{1-p'})}$. We will show this now.
Using again variants of the standard estimates of Lemma \ref{lem:standardEst} we get
\begin{align*}
\|Ug\|_{L^{p'}(\mu^{1-p'})} &\sim \Bigg\| \Bigg( \sum_{V_1, V_3} \frac{1_{V_1}}{|V_1|} \otimes \big[\big(S_{\calD^2} \bla g, h_{V_1}\otimes h_{V_3}\bra_{1,3}\big)\bla \nu \bra_{V_1 \times V_3}^{1,3}\big]^2 \otimes  \frac{1_{V_3}}{|V_3|} \Bigg)^{1/2} \Bigg\|_{L^{p'}(\mu^{1-p'})} \\
&\lesssim
 \Bigg\| \Bigg( \sum_{V_1, V_3} \frac{1_{V_1}}{|V_1|} \otimes \big(S_{\calD^2} \bla g, h_{V_1}\otimes h_{V_3}\bra_{1,3}\big)^2 \otimes  \frac{1_{V_3}}{|V_3|} \Bigg)^{1/2}\nu \Bigg\|_{L^{p'}(\mu^{1-p'})} \\
 &=
  \Bigg\| \Bigg( \sum_{V_1, V_3} \frac{1_{V_1}}{|V_1|} \otimes \big(S_{\calD^2} \bla g, h_{V_1}\otimes h_{V_3}\bra_{1,3}\big)^2 \otimes  \frac{1_{V_3}}{|V_3|} \Bigg)^{1/2} \Bigg\|_{L^{p'}(\lambda^{1-p'})} \\ &\lesssim \|g\|_{L^{p'}(\lambda^{1-p'})}.
\end{align*}
We have shown that
$
|E_2| \lesssim \|f\|_{L^p(\mu)}.
$
The error term with $\bla b \bra_{I_1 \times K_2}^{1,2} - \bla b \bra_{I_1 \times K_2 \times K_3}$ is handled similarly. This ends our proof of the tri-parameter Bloom estimate \eqref{eq:triBloom}.

 \end{document}